\documentclass[12pt,english]{amsart}
\usepackage[T1]{fontenc}
\usepackage{textcomp}
\usepackage[latin9]{inputenc}
\usepackage{prettyref}
\usepackage{amstext}
\usepackage{amsthm}
\usepackage{amssymb}
\usepackage{amsmath,amssymb,amsthm,amsfonts,amsbsy,latexsym,dsfont}
\makeatletter
\numberwithin{figure}{section}
\usepackage{enumitem}
\theoremstyle{plain}
\newtheorem{thm}{\protect\theoremname}
\theoremstyle{plain}
\newtheorem{conjecture}{\protect\conjecturename}
\theoremstyle{plain}
\newtheorem{lem}[thm]{\protect\lemmaname}
\theoremstyle{plain}
\newtheorem{cor}[thm]{\protect\corollaryname}
\theoremstyle{plain}

\theoremstyle{plain}

\theoremstyle{definition}

\usepackage{fullpage}
\usepackage{amsfonts}
\usepackage{pdfsync}
\usepackage{tikz}
\tikzset{ invisnode/.style={circle, draw=white, fill=white, inner sep=0.01cm},
}
\usepackage{prettyref}
\usepackage{color}
\usepackage{comment}

\usepackage{cite}
\DeclareMathOperator*{\ra}{ran}
\DeclareMathOperator*{\dom}{dom}

\usepackage[b]{esvect}

\usepackage{fancyhdr}
\usepackage{color}

\usepackage{accents,graphicx}

\makeatletter
\DeclareRobustCommand{\cev}[1]{%
  \mathpalette\do@cev{#1}%
}
\newcommand{\do@cev}[2]{%
  \fix@cev{#1}{+}%
  \reflectbox{$\m@th#1\vec{\reflectbox{$\fix@cev{#1}{-}\m@th#1#2\fix@cev{#1}{+}$}}$}%
  \fix@cev{#1}{-}%
}
\newcommand{\fix@cev}[2]{%
  \ifx#1\displaystyle
    \mkern#23mu
  \else
    \ifx#1\textstyle
      \mkern#23mu
    \else
      \ifx#1\scriptstyle
        \mkern#22mu
      \else
        \mkern#22mu
      \fi
    \fi
  \fi
}

\makeatother

\usepackage{babel}
\providecommand{\conjecturename}{Conjecture}
\providecommand{\corollaryname}{Corollary}
\providecommand{\lemmaname}{Lemma}
\providecommand{\theoremname}{Theorem}
\providecommand{\claimname}{Claim}
\providecommand{\questionname}{Question}
\providecommand{\examplename}{Example}

\def\wt{\widetilde}

\def\ZX{\color{purple}}
\newtheorem{prop}[thm]{Proposition}

\newtheorem{example}{Example}

\def\COMMENT#1{}
\let\COMMENT=\footnote

\begin{document}
\title{An introduction to equitable DP  coloring of  graphs}

\author{H. A. Kierstead}
\address{Arizona State University,Tempe, AZ, USA}
\email{kierstead@asu.edu}

\author{Alexandr Kostochka}
\address{University of Illinois at Urbana--Champaign, Urbana, IL 61801}
\email{kostochk@illinois.edu}
\thanks{Research of second author is supported in part by   NSF RTG Grant DMS-1937241.}

\author{Zimu Xiang}
\address{University of Illinois at Urbana--Champaign, Urbana, IL 61801}
\email{ zimux2@illinois.edu}
\thanks{Research of third author is supported in part  by NSF RTG Grant DMS-1937241.}

\subjclass[2010]{Primary 05C07, 05C15, 05C35}
\keywords{equitable coloring, equitable list coloring, DP coloring}

\date{\today}

\begin{abstract}
 A proper $k$-coloring of vertices of an $n$-vertex  graph is \emph{equitable} if the size of every color class
 is $\lfloor n/k\rfloor$ or $\lceil n/k\rceil$.
  An extension of  it to list coloring requires only that   the size of every color class
 is at most $\lceil n/k\rceil$.
 Such colorings have interesting applications and have been actively studied recently.
 In this paper, we extend the notion of equitable coloring to the more general notion of equitable DP coloring and study properties of the new parameter.
\end{abstract}

\maketitle

\section{Introduction}

Let $G=(V,E)$ be a graph. We let $|G|=|V|$ and $\|G\|=|E|$.  
{The {\em neighborhood}, $N(x)=N_G(x)$, of a vertex $x$ in $G$ is the set of neighbors of $x$ in $G$,  the
{\em degree} of  $x$ in $G$ is  $d(x)=d_{G}(x):=|N_{G}(x)|$. 
Further,} $\Delta(G)$ is the maximum degree of $G$, and $\delta(G)$ is the minimum degree of $G$. For a positive integer  $n$, set $[n]:=\{1,\dots, n\}$. 

For $d\geq 1$, an ordering $\sigma=(v_1,\ldots,v_n)$ of the vertices of an $n$-vertex graph $G$ is $d$-{\em degenerate}, if for each $1\leq i\leq n$, $|N(v_i)\cap \{v_1,\ldots,v_{i-1}\}|\leq d$. We call a graph {\em $d$-degenerate} if it has a $d$-degenerate ordering.

The goals of this paper are (a) to introduce the notions of equitable DP-coloring and strongly equitable DP-coloring, (b) to show examples when such colorings behave differently from equitable list coloring and strongly equitable list coloring, (c) to prove the basic results on such colorings, and (d) to state  questions/problems on such colorings that seem interesting to us. 
We start from definitions and { basic results on various versions} 
of equitable coloring.

\subsection{{Ordinary and equitable colorability}}

A (proper) $k$-coloring of a graph $G=(V,E)$ is a mapping $f:V\to [k]$ such that for each
$i\in [k]$ the {\em color class} $f^{-1}(i)$ is an independent set in $G$.

An {\em equitable $k$-coloring}  of an $n$-vertex graph $G=(V,E)$ is a $k$-coloring $f$ of $G$ such that each color class $f^{-1}(i)$ has $\lfloor n/k\rfloor$ or $\lceil n/k\rceil$ vertices.

Equitable coloring and similar notions for graphs  have applications in construction of
timetables, mutual exclusion
scheduling problem, and round-a-clock scheduling; see, e.g.~\cite{BEPSW,HHK,IL,KW,Mey,SBG,Tu}. 
This concept is also  useful in studying extremal combinatorial and
probabilistic problems, see e.g.~\cite{AF,AY,JR,Pe,RR}.

One of the central results on equitable coloring of graphs is the 
 Hajnal--Szemer\' edi Theorem below.
 
 \begin{thm}[Hajnal and Szemer\' edi \cite{HSz}, 1970]
\label{thm:H-Sz}Every graph $G$ with $\Delta(G)<k$ is equitably $k$-colorable.
\end{thm}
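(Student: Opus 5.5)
We plan to follow the short proof of Kierstead and Kostochka, arguing by induction on $\|G\|$.

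\textbf{Reduction.} First we reduce to the case $k \mid n$ by adding isolated vertices, or disjoint copies of $K_1$. Adding at most $k-1$ isolated vertices does not raise $\Delta$. An equitable $k$-coloring of the enlarged graph, with all classes of size $s=n/k$, restricts to an equitable coloring of $G$, since deleting at most one vertex from each of $k-1$ classes keeps the class sizes within one of each other. The base case $\|G\|=0$ is trivial.

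\textbf{Setup for the induction step.} For the induction step, we pick an edge $xy$. By induction, $G-xy$ has an equitable $k$-coloring. If $x$ and $y$ get different colors, we are done. Otherwise we recolor $x$: since $d(x)\le k-1$, some class $W$ contains no neighbor of $x$. Moving $x$ into $W$ gives a proper coloring that is \emph{nearly equitable}: one class $V^-$ of size $s-1$, one class $V^+$ of size $s+1$, and all others of size $s$. So the core lemma is: a nearly equitable $k$-coloring of a graph with $\Delta<k$ can be converted into an equitable one.

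\textbf{Proof of the core lemma.} We build an auxiliary digraph $\mathcal H$ on the color classes, with $V\to W$ whenever some $y\in V$ has no neighbors in $W$, so that $y$ can be moved to $W$. Let $\mathcal A$ be the set of classes from which $V^-$ is reachable, and $\mathcal B$ the rest. If $V^+\in\mathcal A$, we shift vertices along a directed path from $V^+$ to $V^-$ and are done. Otherwise, let $a=|\mathcal A|$ and $b=|\mathcal B|$, with $a+b=k$. Every $y\in B:=\bigcup\mathcal B$ has a neighbor in each class of $\mathcal A$, so $d(y)\ge a$ and hence $b\ge 1$. We then induct on $b$ (or on $a$) using a finer structure. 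Call a class $V\in\mathcal A$ \emph{terminal} if every class of $\mathcal A$ can still reach $V^-$ in $\mathcal H[\mathcal A\setminus\{V\}]$. A vertex $z\in V$ is \emph{solo} for $y\in B$ if $z$ is $y$'s unique neighbor in $V$. The plan is:
\begin{enumerate}
\item Double count edges between $B$ and a terminal class $V$, together with the solo structure. This yields a vertex $z\in V$ with few neighbors in $B$ but many solo neighbors.
\item If some solo neighbor $y$ of $z$ can move, swap $y$ into $V$ and push $z$ onward. When $z$ can enter a class of $\mathcal A\setminus\{V\}$, this completes the shifting path to $V^-$.
\item Otherwise, $z$ is movable to some class in $\mathcal B$. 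Then apply induction to a smaller configuration, namely $\mathcal H$ restricted to $\mathcal B$ plus $V$, with $V$ taking the role of $V^-$.
\end{enumerate}

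\textbf{Main obstacle.} The delicate step is the counting argument that guarantees such a $z$. The inequality $\Delta<k$ must be used tightly, via $a(s)+\dots$ bounds comparing $|B|=bs+1$ against degrees, to show that some $z$ has a solo neighbor $y$ with $y$ movable to a class of $\mathcal B$ after the swap. We would first try the accounting from the Kierstead--Kostochka short proof, which uses the fact that a terminal class with no good vertex forces $\sum_{y\in B}d(y)\ge k|B|$, a contradiction.
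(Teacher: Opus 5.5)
The paper does not prove this theorem; it is quoted from Hajnal and Szemer\'edi as background, so your proposal has to stand on its own. Several parts of your set-up are right: induction on $\|G\|$, the reduction to a nearly equitable coloring, the digraph $\mathcal H$, and $d_A(y)\ge a$ for $y\in B$, so $b\ge 1$. But the core lemma, which is the whole content of the theorem, is not proved. The counting you call the main obstacle is never carried out. The inequality you attribute to Kierstead--Kostochka is not derived, and \emph{good} is never defined. In addition, two of your outlined steps fail as stated.

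\textbf{Step 2 does not finish.} After $y\in Y\in\mathcal B$ moves into $V$, $z$ moves into $W\in\mathcal A\setminus\{V\}$, and you shift along a $W$--$V^-$ path avoiding $V$, every class on $A+y$ has size $s$. However, $Y-y$ has size $s-1$ while $V^+$ still has size $s+1$, unless $Y=V^+$. The missing idea is this: each $w\in B$ has at least $a$ neighbours in $A$, so $\Delta(G[B])\le b-1$. Hence $G[B-y]$ carries a nearly equitable $b$-coloring with maximum degree less than $b$. The lemma must then be applied to $G[B-y]$ by induction on the number of colors, and the result combined with the $a$ classes on $A+y$.

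\textbf{Step 3 cannot work.} The configuration on $\mathcal B\cup\{V\}$ never satisfies the lemma's hypothesis. Every vertex of $B$ has a neighbour in $V$, so $\|V,B\|\ge |B|=bs+1>b|V|$. Thus some vertex of $V$ has more than $b$ neighbours in $B$, that is, $\Delta(G[B\cup V])>b$, and the lemma with $b+1$ colors is unavailable there. Consequently the genuinely hard case has no valid argument: the case in which no solo vertex of any terminal class can be moved to another class of $\mathcal A$.

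\textbf{The padding step is also wrong.} Adding isolated vertices does not force the new vertices into distinct classes. Two of them may share a class, and then the restriction to $G$ has classes of sizes $s-2$ and $s$, which is not equitable. Pad instead with a clique $K_p$, $p\le k-1$. This keeps $\Delta<k$, and its vertices necessarily receive distinct colors.
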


Another line of research considers equitable $k$-coloring of sparse graphs $G$ with
$k<\Delta(G)$. 
Meyer~\cite{Mey} proved that every tree with maximum degree $\Delta$ has an equitable $k$-coloring
for $k = 1 + \lfloor \frac{\Delta}{2}\rfloor$.
 The extremal example is the star $K_{1,\Delta}$: since the color class of the  center vertex has size one, no other class can have size greater than two.
 Bollob\' as and Guy~\cite{BG83} gave for trees a bound that is not a function of maximum degree.

\begin{thm}[Bollob\' as and Guy~\cite{BG83}, 1983]\label{BoG}
    A tree $T$ is equitably $3$-colorable if $|T|\ge 3\Delta(T)-8$ or $|T|=3\Delta(T)-10.$
\end{thm}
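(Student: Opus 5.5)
We prove Theorem~\ref{BoG} by working with the bipartition $(A,B)$ of $T$, where $|A|\ge |B|$ and $n=|T|$. An equitable $3$-coloring uses class sizes in $\{\lfloor n/3\rfloor,\lceil n/3\rceil\}$. The plan is to take one color class $C_1\subseteq A$ of size about $n/3$, and then split the rest, $T-C_1$, into two independent sets of the right sizes.

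\emph{Reduction.} Write $A'=A\setminus C_1$. The graph $T-C_1$ is a forest, and its vertices are $A'\cup B$. We need to split this set into two independent sets of sizes $\approx n/3$. In a forest, a proper $2$-coloring is obtained by choosing, for each component independently, which side gets which color. So the task becomes a subset-sum problem. For each component $K_j$ of $T-C_1$, let $a_j=|K_j\cap A'|$ and $b_j=|K_j\cap B|$. We want to choose orientations so that $\sum_j(\text{chosen side of }K_j)$ lands in $\{\lfloor n/3\rfloor,\lceil n/3\rceil\}$, with total $n-|C_1|$.

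Isolated vertices give unit-size flexibility, since each can go on either side. The strategy is therefore to choose $C_1$ so that $T-C_1$ has many isolated vertices, or many small components. Concretely:
\begin{itemize}
\item Put into $C_1$ vertices of $A$ adjacent to many leaves or low-degree $B$-vertices, so that removing them creates isolated $B$-vertices.
\item Alternatively, take $C_1$ inside $B$ when $|B|\ge n/3$.
\end{itemize}
If $|B|<n/3$, then $|A|>2n/3$. In that case we instead take two classes inside $A$ and one class containing $B$ plus some $A$-vertices, chosen so that those $A$-vertices have no neighbors among the other chosen vertices.

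\emph{Key counting step.} The obstruction is a vertex $v$ of huge degree: its color class has all of $N(v)$ excluded, so $N(v)$ must fit in the other two classes. This gives $d(v)\le n-\lceil n/3\rceil$, roughly $2n/3$, i.e.\ $n\gtrsim 3\Delta/2$. That is weaker than the hypothesis, so the real constraint is subtler. I expect the extremal examples to be double stars or spiders around a max-degree vertex, where the leaves of $v$ and the structure elsewhere conflict.

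The plan is induction on $n$. Remove a small configuration — a leaf pair, or three leaves or near-leaves — that can be given three distinct colors compatibly with any equitable coloring of the remainder. Then apply induction to the smaller tree $T'$, keeping $|T'|\ge 3\Delta(T')-8$. If $\Delta$ drops, the hypothesis is easier to satisfy. If it is preserved, we need $|T'|=n-3$, and the hypothesis with the constant offset is exactly preserved under removing $3$ vertices: $n-3\ge 3\Delta-8$ needs $n\ge 3\Delta-5$. This matches the shape of the bound, and explains the exceptional value $3\Delta-10$ versus the excluded $3\Delta-9$.

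\emph{Main obstacle.} The hardest part is finding three removable vertices that are pairwise nonadjacent or suitably attached, and that can always be colored with three distinct colors after the rest is colored. When such a triple does not exist, or when $n$ is near the threshold ($3\Delta-8\le n\le 3\Delta-6$ and the case $3\Delta-10$), the tree is essentially a max-degree vertex $v$ with many pendant paths. Those trees need a direct explicit coloring:
\begin{itemize}
\item put $v$ in class $1$;
\item distribute the neighbors of $v$ between classes $2$ and $3$;
\item fill class $1$ with second-neighborhood vertices.
\end{itemize}
This direct coloring is where the exact constants $-8$ and $-10$, and the failure at $-9$, must be verified by case analysis on the numbers of leaves versus non-leaves adjacent to $v$. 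I would first settle the spider/double-star base cases by explicit counting. Then I would show that any tree not of that form contains a reducible triple (e.g.\ a leaf whose neighbor has degree $2$, or two leaves at a common vertex plus a distant leaf).
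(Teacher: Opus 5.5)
\textbf{There is a genuine gap.} Your proposal is a plan rather than a proof, and the steps you defer are exactly where the content lies. The two decisive claims are only announced:
\begin{itemize}
\item that every tree not of ``spider/double-star'' type contains a reducible triple whose removal keeps the hypothesis;
\item that the remaining near-threshold trees can be coloured by hand.
\end{itemize}
Moreover, the reduction as described fails in general. If three deleted leaves have neighbours that all receive the same colour $c$ in the colouring of $T'$, the leaves cannot get three distinct colours, since each must avoid $c$. Also, for $n\in\{3\Delta-10,3\Delta-8,3\Delta-6\}$, deleting three vertices without lowering $\Delta$ lands at $3\Delta-13$, $3\Delta-11$, $3\Delta-9$, all outside the hypothesis. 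So you must delete a neighbour of \emph{every} vertex of degree $\Delta$. Near the threshold there can be two such vertices: for instance, two degree-$\Delta$ vertices with a common neighbour exist when $\Delta\ge 9$, and they may receive the same colour in $T'$. So these trees are not ``essentially a max-degree vertex with pendant paths,'' and extendability is precisely what remains unproved. Your opening subset-sum reduction is likewise never carried out.

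The missing idea is the correct obstruction, which your ``key counting step'' looks for but does not find. The class of $v$ is an independent set containing $v$, so you need $\alpha_v\ge\lfloor n/3\rfloor$. Since $T-N[v]$ is a forest on $n-1-d(v)\ge n-1-\Delta$ vertices, its larger bipartition class gives $\alpha_v\ge 1+\lceil (n-1-\Delta)/2\rceil$. A direct check (write $n=3\Delta-8+j$, and treat $n=3\Delta-10$ separately) shows this is at least $\lfloor n/3\rfloor$ exactly when $n\ge 3\Delta-8$ or $n=3\Delta-10$.

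The paper gives no proof of this theorem; it is quoted from Bollob\'as and Guy. However, it states Theorem~\ref{CLCh} immediately afterwards, and with $k=3$ that result turns the inequality above into a complete two-line proof.

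This also explains the constants correctly. The spider with $5$ legs of length $1$ and $\Delta-5$ legs of length $3$ has $n=3\Delta-9$ and $\alpha_v=\Delta-4<n/3$. At $n=3\Delta-10$ the bound holds because $n-1-\Delta$ is odd. Your heuristic about the ``offset preserved under deleting three vertices'' is not the reason. If you want to avoid Theorem~\ref{CLCh}, you must actually supply the reducibility lemma and the base-case analysis, and that is where all the work lies.
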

This result was extended to { $k$-colorings for all $k\geq 2$} and to all forests by Chen and Lih~\cite{CL94} and Miyata,  Tokunaga and Kaneko~\cite{MTK}.
Given a graph $G$ and a vertex $v\in V(G)$, let $\alpha_v=\alpha_v(G)$ denote the size of a maximum independent set in $G$ containing $v$. If a graph $G$ has an equitable $k$-coloring, then by definition, $\alpha_v(G)\geq \lfloor \frac{n}{k}\rfloor$ for every $v\in V(G)$.
Chen and Lih~\cite{CL94} and  independently Miyata,  Tokunaga and Kaneko~\cite{MTK} proved (and Chang~\cite{Ch08} gave a shorter and nicer proof) that this necessary condition is sufficient for forests.

\begin{thm}[Chen and Lih~\cite{CL94}, 1994, Miyata,  Tokunaga and Kaneko~\cite{MTK}, 1994]\label{CLCh}
    For a forest $T$ of order $n$ and integer $k\ge 3$, $T$ is equitably $k$-colorable if and only if $\alpha_v\ge\lfloor \frac{n}{k}\rfloor$ for every vertex $v\in T$.
\end{thm}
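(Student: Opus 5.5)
Necessity is immediate: in an equitable $k$-coloring every class has at least $\lfloor n/k\rfloor$ vertices, so the class containing $v$ is an independent set through $v$ of that size. The work is sufficiency. I would argue by induction on $n$, in the style of Chang's short proof. Write $n=qk+r$ with $0\le r<k$. The goal is a coloring with $r$ classes of size $q+1$ and $k-r$ classes of size $q$.

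If $q=0$, every coloring with distinct colors works. Otherwise we want to remove an independent set $S$ of size $q$ or $q+1$ and apply induction to $T-S$ with $k-1$ colors. The size must be chosen so that $T-S$ has order $q(k-1)+r'$ with floor still $q$. This suggests taking $|S|=q$ when $r<k-1$, or peeling off a $(q+1)$-set when $r>0$. We then need $T-S$ to still satisfy $\alpha_v(T-S)\ge q$ for every $v$. Since $k\ge 3$, we have $k-1\ge 2$, but the inductive statement needs $k-1\ge3$. So the base case $k=3$ must be handled separately, or better, the induction should be arranged on $n$ with $k$ fixed.

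A cleaner route is to fix $k$ and induct on $n$ by removing $k$ vertices forming a "transversal." That is, find $k$ vertices $x_1,\dots,x_k$ whose removal leaves a forest $T'$ with $\alpha_v(T')\ge q-1$ for all $v$. Then color $T'$ equitably by induction and extend by giving $x_i$ color $i$. For this extension to work, $x_i$ must have no neighbor of color $i$. That is too rigid, so instead I would take the $x_i$ to be leaves or isolated vertices and recolor via a matching argument.

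The concrete plan is as follows. Let $v$ be a vertex of maximum degree $\Delta$. The hypothesis $\alpha_v\ge q$ says the forest minus $N[v]$ has independence number at least $q-1$, roughly forcing $n-\Delta-1\ge 2(q-1)$, so $\Delta$ is not too large. First handle the extreme case where some vertex $v$ has $\alpha_v$ exactly $q$ and is a large star center. There, $v$'s class is forced: it must be a maximum independent set through $v$. Remove that set and color the remainder, which has low maximum degree relative to its size, with $k-1$ colors using the $\Delta$-type bound. This is where Theorem~\ref{BoG} and Meyer's star-type reasoning guide the size bounds. In the generic case, where every $\alpha_v\ge q+1$ or $\Delta$ is small, pick two leaves in a deep part of a longest path together with their structure, delete a small piece, and use induction, then insert the deleted vertices into suitable classes by swapping.

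The main obstacle is verifying that the deletion preserves $\alpha_u\ge\lfloor n'/k\rfloor$ for all remaining $u$, especially for vertices whose large independent sets used the deleted vertices. I would try first to delete a set consisting of a leaf and its parent's other leaf children along a longest path. Any independent set can then be repaired locally, losing at most one vertex, which is compensated by the floor dropping by one after removing $k$ vertices.
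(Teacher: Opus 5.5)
The paper does not prove this theorem; it quotes it from Chen--Lih and Miyata--Tokunaga--Kaneko and cites Chang's shorter proof. So your proposal has to stand on its own.

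\textbf{There is a genuine gap: sufficiency is never proved.} Your necessity argument is fine, but sufficiency is the entire content of the theorem. You outline three strategies: induction on $k$ by deleting an independent set of size $\lceil n/k\rceil$, deleting a $k$-set and inducting on $n$, and a case split at a maximum-degree vertex. None is carried out.
\begin{itemize}
\item No deletion set is pinned down for which you verify that the smaller forest still satisfies the $\alpha$-condition.
\item No extension argument is given. ``Insert the deleted vertices into suitable classes by swapping'' is a hope, not a step.
\item The tight case $\alpha_v=q$ is dispatched by pointing to Theorem~\ref{BoG} and Meyer's bound. Those concern trees with $k=3$ or $k=1+\lfloor\Delta/2\rfloor$. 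You never check that the forest left after removing $v$'s class satisfies their hypotheses.
\item For $k=3$ that remainder would need an equitable $2$-coloring, where the analogous criterion is false. $K_{1,3}+K_{1,3}+K_{1,3}$ has $\alpha_v\ge 6=\lfloor 12/2\rfloor$ for every $v$, yet every proper $2$-coloring has odd class sizes.
\item An auxiliary claim is also wrong. $\alpha_v\ge q$ gives only $n-\Delta-1\ge q-1$, not $n-\Delta-1\ge 2(q-1)$; the forest bound $\alpha(F)\ge |F|/2$ runs the other way. The forest $K_{1,q(k-1)}+\overline{K}_{q-1}$ satisfies the hypothesis with $n-\Delta-1=q-1$.
\end{itemize}

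\textbf{Your one concrete reduction fails twice.} You delete a leaf together with the other leaf children of its parent $p$, relying on the fact that removing $k$ vertices lowers $\lfloor n/k\rfloor$ by one.
\begin{itemize}
\item The local repair claim is false. A maximum independent set through $u$ may contain all the deleted siblings, and afterwards you can add back at most $p$. Take $K_{1,k^2-2}+K_{1,k}$: here $n=k^2+k$, and the centre $u$ of the big star has $\alpha_u=k+1=\lfloor n/k\rfloor$. Deleting the $k$ leaves of the small star leaves $k^2$ vertices with $\alpha_u=2<k$, so the hypothesis is destroyed.
\item Even when the hypothesis survives, $k$ deleted sibling leaves cannot be put back. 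They must all avoid the color of $p$. If $k\mid n$, every class of the coloring of $T-X$ has size $q-1$ and must receive exactly one of them. This is impossible without a recoloring argument you do not supply.
\end{itemize}

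\textbf{What a proof needs.} First, a genuine reduction lemma: a $k$-set $X$ such that
\begin{itemize}
\item $\alpha_u(T-X)\ge\lfloor n/k\rfloor-1$ for all $u\notin X$, and
\item every equitable $k$-coloring of $T-X$ extends to an equitable $k$-coloring of $T$, which forces $X$ to be attached to $T-X$ very sparingly.
\end{itemize}
Second, a separate argument for the forests in which no such $X$ exists, typically those with a vertex of very large degree whose $\alpha_v$ is tight. None of this is present.
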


\subsection{{ List, equitable list, and strongly equitable list colorability}} 

Let $\Gamma$ be a set of \emph{colors} and $G=(V,E)$ be a graph. A \emph{list assignment} for $G$ (from $\Gamma$) is a function $L:V\rightarrow 2^\Gamma$.
An \emph{$L$-coloring} $f$ is a proper coloring of $G$ such that $f(v)\in L(v)$ for all $v\in V$.
If $L(v)=\Gamma$ for all $v\in V$ then $L$ is \emph{plain}, 
and all ordinary colorings of $G$ with colors from  $\Gamma$ are  $L$-colorings. 
If {$|L(v)|= k$} for each $v\in V(G)$, then $L$ is a \emph{$k$-list assignment}. 

Kostochka, Pelsmajer and West~\cite{KPW} proposed the following list version of equitable colorability.
A coloring $f$ of $G$ is {\em $m$-bounded} if 
$|f^{-1}(\alpha)|\le m$  for all $\alpha\in \Gamma$. 
 Given $G$ and $k$ write $m:=m(G,k):=\lceil\frac{|G|}{k}\rceil$ for the size of a largest color class in an equitable $k$-coloring of $G$, and call such a color class { \emph{full}}.
Given a $k$-list
assignment $L$ for $G$, Kostochka et al. defined $G$ to be  \emph{equitably $L$-colorable} if it has an $m$-bounded $L$-coloring, and they defined
 $G$ to be \emph{equitably list $k$-colorable (EL $k$-colorable)} if
it is   equitably $L$-colorable for every $k$-list assignment $L$.

In contrast to ordinary equitable coloring, in the list setting it is possible that the sizes of two color classes must differ significantly, 
as it could be that some color  appears only 
 in the lists of   few vertices.
 On the other hand, the theory of equitable list colorability reflects the theory of equitable colorability. This justifies the similar terminology.
The following analog of Theorem~\ref{thm:H-Sz} was  conjectured in~\cite{KPW}.
\begin{conjecture}[Kostochka, Pelsmajer and West, Conjecture 1.1 \cite{KPW}, 2003]\label{conj:list-HS}
    Every graph $G$ is EL {$k$-colorable} for each $k> \Delta(G)$.
\end{conjecture}

Partial cases of the conjecture were proved in~\cite{KPW,MP,WL,KK13,YZ1,YZ2,Na12}, but the conjecture is still wide open.

An anomaly of the definition of EL colorability is that for a
{ plain} $k$-list $L$, { an $L$-coloring of a graph $G$ may be $m$-bounded, but not equitable.} 
For example, if $L$ is a plain $3$-list assignment then an ordinary $2$-coloring of $C_4$ is not an equitable coloring (one color class is empty, 
while two have size $\lceil\frac{|C_4|}{3}\rceil=2$), but it is  a $\lceil\frac{|C_4|}{3}\rceil$-bounded $L$-coloring. 
Recently, we  \cite{KKX} observed that this only occurs when there are too many \emph{large color classes}; 
a class is large if it has size exactly $\lfloor\frac{|G|}{k}\rfloor+1$.
Note that if $k$ divides $|G|$ then $m=\lfloor\frac{|G|}{k}\rfloor$ and an  $m$-bounded coloring of $G$ has { $k$ full color classes}, but no large color classes. 
Given a graph $G$ and integer $k$, a coloring of $G$ is  {strongly $m$-bounded} if it is $m$-bounded and there are at most $|G|\bmod k$ large color classes. 
A graph is \emph{strongly equitably $L$-colorable (SE $L$-colorable)} if it has a strongly $m$-bounded  $L$-coloring.  It is \emph{strongly equitably list $k$-colorable (SEL $k$-colorable)} if it is SE $L$-colorable for
all $k$-list assignments $L$.
If $L$ is plain then every strongly bounded coloring of $G$ is an equitable coloring of $G$, and  so every SEL $k$-colorable graph is equitably $k$-colorable. 

We feel that { SEL $k$-colorability}  is the right extension of equitable { colorability} to the list setting. It may seem like a small difference, since the notions agree when $|G|$ is divisible by $k$, but in induction arguments, divisibility by $k$ may not be preserved.

We think that Conjecture~\ref{conj:list-HS} holds also for SEL colorability.

\subsection{{ DP, equitable DP, and strongly equitable DP colorability}}

About 10 years ago Dvo\v r\' ak and Postle~\cite{2018DvPo} 
invented an interesting generalization of list coloring in order to attack an open problem on list colorings of planar graphs. 
They called it \emph{correspondence   coloring}, and we call it \emph{DP coloring} for short and to reflect their impact. 

Before defining DP-coloring we need some notation. Let $K_{s,t}$ denote the complete bipartite graph with $s$ vertices in one part and $t$ vertices in the other. To specify the vertices of $G:=K_{s,t}$ we write $K_{s,t}(X;Y)$, where $X$ and $Y$ are sets or lists of vertices, and we may drop the subscripts. 

Let $G=(V,E)$ be a graph with list assignment $L:=V\rightarrow2^{\Gamma}$. Set $\wt{L}(x):=\{x\}\times L(x)$. A \emph{matching
assignment} for the pair $(G,L)$ is a function $M$ on $E$ such that $M(uv)$ is
a matching of $K(\widetilde{L}(u);\widetilde{L}(v))$. { A graph $H$ is a \emph{cover} for $G$ if there are
$L$ and $M$, where $L$ is a list assignment for $G$ and $M$ is a matching assignment for $(G,L)$, such
that}
\[
V(H)=\bigcup_{v\in V} \widetilde{L}(v)
~\text{and}~E(H)=\bigcup_{uv\in E}M(uv).
\]
 For $k\in\mathbb{N}$, if $L$ is a $k$-list assignment then $H$ is a $k$-cover.

Suppose $H$ is a cover for $G$. Then this is witnessed by a unique list assignment $L$ and a unique matching assignment $M$. It is convenient to think of $H$ as both a graph and a function $H(\cdot)$ on $V\cup E$ with $H(v):=L(v)$ and $H(uv):=M(uv)$.  Now there is no
need to explicitly mention a list assignment $L$ or a matching assignment $M$ when working with $H$. 
For clarity, we refer to the vertices of $H$ as nodes. 
So a node is an ordered pair $(v,\gamma)$ where $v\in V$ and $\gamma\in L(v)$. 
We may indicate adjacency in the cover $H$ by writing $(x,\alpha)\sim(y,\beta)$.
It is also convenient to define the partial injection $H(u,v):L(u)\to L(v)$ by $H(u,v)(\alpha)=\beta$ if $(u,\alpha)\sim(v,\beta)$. Then $H(u,v)=H^{-1}(v,u)$.
{We emphasize that the statement $H(u,v)(\alpha)=\beta$ means  $uv\in E$ and node $(u,\alpha)$ is matched to node $(v,\beta)$ by  matching $M(uv)$. 
Then $H(u,v)(\alpha)\ne\beta$ means   $uv\notin E$ or  $(u,\alpha)(v,\beta)\notin M(uv)$. }

Given a cover $H$ of a graph $G$ and a $W\subset V(G)$, we denote by $H[W]$ the restriction of $H$ on $G[W]$.

An $\ensuremath{H}$\emph{-coloring} of $G$ is a function $f:V\rightarrow\Gamma$
with $f(v)\in H(v)$ such that $\{(v,f(v)):v\in V\}$ is independent in $H$. For brevity, sometimes instead of writing $f(v_1)=\alpha_1$ $,\ldots, f(v_t)=\alpha_t,$ we will write $f(v_1,\ldots,v_t)=(\alpha_1,\ldots,\alpha_t)$.
The
graph $G$ is \emph{DP $k$-colorable} if it has an $H$-coloring for every $k$-cover
$H$ of $G$. 

{ Call $H$ \emph{plain} if $H|V$ is plain. In this
case assume that $\Gamma=[|\Gamma|]$. Call $H$ \emph{normal} if $H(u,v)$ is the identity function for all $uv\in E$.
Every list coloring problem can be represented as a DP coloring problem by letting $H$ be normal. Similarly every ordinary coloring problem can be represented as a DP-problem by letting  $H$ be plain and normal.}

Many results on list coloring remain true for DP coloring. For example, every planar graph is DP $5$-colorable, every $d$-degenerate graph  is DP $(d+1)$-colorable, and a version of Brooks' Theorem holds for DP coloring.

A graph $G$ is \emph{DP degree-colorable} if $G$ has an $H$-coloring whenever $H$ is a cover of $G$ with $|H(u)|\geq d_G(u)$ for all $u\in V(G)$. 
A \emph{GDP-forest} is a graph in which every block is either a complete graph or a cycle.
A {\em GDP-tree} is a connected GDP-forest. 

A classical theorem of Gallai~\cite{gallai1963kritische}
on degree-colorable graphs extends  to DP coloring as follows. 

\begin{thm}[Dvo\v r\' ak and Postle~\cite{2018DvPo}]
\label{thm:deg-choosable}
    Suppose that $G$ is a connected graph. Then $G$ is not DP degree-colorable if and only if $G$ is a GDP-tree.
\end{thm}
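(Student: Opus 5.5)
We prove the two directions of Theorem~\ref{thm:deg-choosable} separately; the ``if'' direction is the substantial one.

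\textbf{GDP-trees are not DP degree-colorable.} We argue by induction on the number of blocks and build a bad cover for every GDP-tree $G$. If $G$ is a single block $K_n$, take the plain normal cover with $n-1$ colors, which has no coloring. If $G$ is a single block $C_n$, take $H(v)=\{1,2\}$ for every $v$ and let all matchings be the identity except one twisted edge. Any $H$-coloring would have to alternate colors around the cycle, and the twist makes the parity fail, so no coloring exists.

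In general, let $B$ be a leaf block attached to the rest $G'$ at a cut vertex $x$. By induction $G'$ has a bad cover $H'$, and $B$ has a bad cover $H_B$ from the base case. Set $H(x)=H'(x)\sqcup H_B(x)$, with the two color sets made disjoint, and take the union of the two covers. Then $|H(x)|=d_{G'}(x)+d_B(x)=d_G(x)$. Any $H$-coloring gives $x$ a color from one of the two parts, which yields either an $H'$-coloring of $G'$ or an $H_B$-coloring of $B$; both are impossible.

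\textbf{Connected non-GDP-trees are DP degree-colorable.} Let $H$ be a cover with $|H(u)|\ge d(u)$ for all $u$. The greedy tool is this: if some vertex $r$ has $|H(r)|>d(r)$, order $V$ by decreasing distance from $r$ (reverse BFS order) and color greedily. Every $v\neq r$ has a neighbor later in the order, so at most $d(v)-1$ of its colors are blocked. At $r$ at most $d(r)<|H(r)|$ colors are blocked. So we may assume $|H(u)|=d(u)$ everywhere.

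The same argument works whenever we can find adjacent $u,w$ and colors $\alpha\in H(u)$, $\beta\in H(w)$ that are not matched to each other and can be assigned first so that the remainder still colors greedily. The standard way to arrange this, as in the Erd\H{o}s--Rubin--Taylor and Borodin proofs, is:
\begin{itemize}
\item If $G$ is $2$-connected and neither complete nor a cycle, it contains an induced even cycle with at most one chord, or an induced path $u\,v\,w$ with $G-u-w$ connected (Brooks-type lemma).
\item In the second case, we want to give $u$ and $w$ colors whose images in $H(v)$ coincide or are absent. This saves a color at $v$, and then we color $G-u-w$ toward $v$ last.
\end{itemize}

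\textbf{Main obstacle.} In DP coloring, $u$ and $w$ are non-adjacent, so we may choose $\alpha\in H(u)$ and $\beta\in H(w)$ freely. Since $|H(u)|=d(u)\ge 2$, we can try to pick them so that $H(u,v)(\alpha)=H(w,v)(\beta)$, or so that one of them is unmatched toward $v$. If the two matchings to $v$ have disjoint images, we need a different configuration.

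I would handle this with the block structure. Reduce to a $2$-connected block $B$ that is neither complete nor a cycle; such a block exists because $G$ is not a GDP-tree. Color the rest of the graph first, working from leaf blocks inward toward $B$, in reverse BFS order. This leaves a cover on $B$ with $|H(u)|\ge d_B(u)$.

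Then argue directly on $B$. If $B$ contains two adjacent vertices $u,v$ whose matching $H(u,v)$ is not a full perfect matching with consistent structure, exploit that slack. Otherwise all matchings are perfect and $|H|=d$. In that case, following Dvo\v{r}\'ak--Postle and Bernshteyn--Kostochka--Pron, use the theta or even-cycle subgraph guaranteed in a non-Gallai $2$-connected graph and show it admits a coloring extendable greedily.

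This final case analysis on the $2$-connected block is where I expect most of the work to be.
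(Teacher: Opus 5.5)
The paper quotes this theorem from Dvo\v{r}\'ak and Postle without proof, so your proposal has to stand on its own. As written, it has a genuine gap: the direction ``not a GDP-tree $\Rightarrow$ DP degree-colorable'' is not proved. There is also a smaller error in the other direction.

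\textbf{The main gap.} Your reductions are correct. Greedy coloring toward a vertex $r$ with $|H(r)|>d(r)$, and greedy coloring of $G-V(B)$ toward a block $B$ that is neither complete nor a cycle, reduce you to a $2$-connected $G$ of that kind with a cover satisfying $|H(u)|\ge d(u)$. From there, however, you only name tools, and one of them fails for DP coloring. A chordless even cycle is itself a GDP-tree, so Rubin's even-cycle lemma gives nothing here; in $K_{2,3}$ it only produces chordless $C_4$'s. The obstacle you raise for the induced path $uvw$, namely disjoint images of $H(u,v)$ and $H(w,v)$ in $H(v)$, is left open, and the rest is deferred to ``case analysis.''

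\textbf{The missing idea.} In the $2$-connected case this obstacle never arises. Suppose $G$ has no $H$-coloring. Then for every edge $uv$, every node $(u,\alpha)$ has a neighbor in $H(v)$. Otherwise, color $u$ with $\alpha$ and color the connected graph $G-u$ greedily in order of decreasing distance from $v$. Every $x\ne v$ has at most $d(x)-1$ colored neighbors when it is reached. At $v$, the neighbor $u$ blocks nothing, so $v$ also has a free color, a contradiction.

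Hence each $H(u,v)$ is a bijection $H(u)\to H(v)$. Since $|H(u)|=d(u)$ by your slack reduction, you get $d(u)=d(v)$ on every edge, so $G$ is $d$-regular. As $G$ is $2$-connected and not a cycle, $d\ge 3$.

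Now Lov\'asz's lemma applies: a $2$-connected, non-complete, $d$-regular graph with $d\ge3$ has an induced path $uvw$ with $G-u-w$ connected. Because $H(u,v)$ and $H(w,v)$ are both onto $H(v)$, you can choose $\alpha,\beta$ with $H(u,v)(\alpha)=H(w,v)(\beta)$. Color $u$ and $w$ with these and finish greedily toward $v$, which has a spare color. So your obstacle disappears exactly in the case you flagged as the hard one; without this step the argument is incomplete.

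\textbf{The smaller error.} In the base case for cycles, your $2$-cover of $C_n$ with exactly one twisted edge is colorable when $n$ is odd. For example, on $C_5$ with the twist on $v_5v_1$, the coloring $1,2,1,2,1$ works, since the twisted edge forces equal colors at its ends. A $2$-cover of this type has no coloring exactly when the number of untwisted edges is odd. So twist one edge when $n$ is even and none when $n$ is odd. Your gluing of bad covers at a cut vertex is correct.
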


As we already have the definitions of equitable and strongly equitable list { colorability, it is natural to define equitable and strongly equitable DP colorability analogously.} 
Given a $k$-cover $H$ of $G$, 
call $G$ \emph{equitably $H$-colorable} if $G$ has an { $m$-bounded} $H$-coloring, and call $G$ \emph{equitably DP  $k$-colorable (EDP $k$-colorable)} if it is equitably $H$-colorable
for all $k$-covers $H$ of $G$. 
Call $G$ \emph{strongly equitably  $H$-colorable (SE $H$-colorable)} if there is a strongly $m$-bounded $H$-coloring of $G$, and call $G$ \emph{strongly equitably DP $k$-colorable (SEDP $k$-colorable)} if 
  $G$ is SE $H$-colorable for every $k$-cover graph $H$ of $G$. 

  \subsection{Terminology}
  {One issue remains. What should we call graph  colorings that witnesses that a graph is equitably $L$-colorable, strongly equitably $H$-colorable, etc? Kostochka et al. used the term \emph{$m$-bounded  L-coloring.} But they also informally used the undefined term \emph{equitable $L$-coloring}. Since then the latter formulation has been used by many authors including us. It is not really correct; grammatically, it should refer to an $L$-coloring that is equitable in the ordinary sense. This issue becomes more noticable as we explore new versions of equitable colorability. Here we will revert to \emph{$m$-bounded $L$-coloring} and use \emph{strongly $m$-bounded $H$-coloring, etc}.}

\subsection{Organization}
While EDP coloring is similar to EL coloring, the situation with bounds is very  different. For example,   {$K_{2}$} is not  EDP 2-colorable, a graph on $n$ vertices  may not be EDP $n$-colorable, and so on.
In this paper we present some introductory results on EDP and SEDP coloring. 


In the next section, we 
  present a number of examples of graphs $G$ such that 
  are SEL $k$-colorable, but not SEDP $k$-colorable.
 In Section 3 we prove 
 that if $G$ is not SEDP $(n+d)$-colorable then $|G|>n$ or $G$ is not $d$-degenerate, and that there are infinitely many $d$-degenerate graphs on $n$ vertices that are not EDP $(n+d-1)$-colorable.
  In Section 4 we consider SEDP colorability of forests with given maximum degree.  In Section 5 we show that if
 $G$ is not EDP $k$-colorable then $k<3\Delta^2(G)-2$. We think that this bound is far from optimal.
 In Section 6 we conclude with some comments and questions.
 
 \subsection{Notation.} 
 
 Let $G$ and $H$ be graphs. Set $G\vee H=G\cup H \cup K(V(G),V(H))$. Let $\overline{G}$ be the complement of $G$. Let $G+H$ be the union of \emph{disjoint copies} of $G$ and $H$.

 Let $K_{n}$ be the complete  graph on $n$ vertices. Let $K(X)$ be the complete graph with vertex set $X$ or $\{X\}$ depending on whether $X$ is a set or a list. Similarly, $K(X,Y):=\overline{K}(X)\vee \overline{K}(Y)$. Let $P_{n}$ be the path on $n$ vertices and $C_{n}$ be the cycle on $n$ vertices. Let $P=v_{1}\dots v_{n}$ be the path with vertices $v_{1 },\dots,v_{n}$ whose edges are $v_{1}v_{2},\dots, v_{n-1}v_{n}$. Let $v_{1}\dots v_{n}v_{1}$ be the cycle $C=P+v_{n}v_{1}$.

 Given a graph $G$, sets $A,B\subseteq V(G)$ and a vertex $x\in V(G)$, we denote by $E(A,B)$ the set of edges with one end in $A$ and the other in $B$, and let $||A,B||:=|E(A,B)|$, $A+x:=A\cup \{x\}$, $A-x:=A\smallsetminus \{x\}$ and $N[x]:=N(x)\cup \{x\}$.

 For a function $f$ and a set $A$, we denote by $\dom(f)$---the domain of $f$, by $\ra(f)$---the range of $f$, by $f|A$---the restriction of $f$ to $A$, and
by $2^A$---the set of all subsets of $A$.

 Addition { and subtraction modulo $s$ are denoted by $\oplus_s$ and $\ominus_s$, respectively,} and we drop $s$ when it is clear from the context.
 
Let $S_n$ denote the set of permutations of $[n]$.  We use cycle notation to express such permutations. For example, 
\[S_3=\{(1~2~3),(1
~3~2),(1)(2)(3),(1)(2~3),(1~3)(2),(1~2)(3)\}.\] 

 \section{Examples}
 In this section we provide some examples to illustrate complications of  EL and EDP { colorability}. To start,
 observe that if a graph is DP { $k$-colorable} then all its subgraphs are too. But an EL { $k$-colorable} graph may have subgraphs that are not EL { $k$-colorable}. For instance, consider $K_{1,3}\subseteq K_{1,3}+K_{1,3}$ with $k=2$. 

Every path is DP { $2$-colorable} and SE{L} $2$-colorable. However:
\begin{example}[Paths] No path $P_n=v_1\dots v_n$ with $n\ge2$ is 
 EDP $2$-colorable.
\end{example}

\begin{figure}
    \centering
    \begin{tikzpicture}[scale =1]
\def \vt{circle (2.5pt) [fill]} 

\foreach \j in {0,1,2,3,4,5} { 
    \draw (\j,0) \vt;  
    
} 
\node[invisnode] at (2.2,0) (a) {};
\node[invisnode] at (2.8,0) (b) {};
\foreach \j in {0,1,3,4} { \draw (\j,0)--(\j+1,0); } 
\draw (2,0)--(a);
\draw (b)--(3,0);
\node[] at (2.5,0)  {$\dots$};

\node[label={above:$v_1$}] at (0,0) (v1) {};
\node[label={above:$v_n$}] at (5,0) (vn) {};

\foreach \j in {0,1,2,3,4,5} { 
    \draw (\j,-.8) \vt;  
    \draw (\j,-1.4) \vt; 
} 

\foreach \j in {0,1,3,4} {\draw (\j,-.8)--(\j+1,-1.4);\draw (\j,-1.4)--(\j+1,-.8);
}

\node[] at (2.5,-1.1)  {$\dots$};

\draw[rounded corners, color=blue, fill=blue!5,fill opacity=0.3](-.3,-1.6) rectangle (.3,-.6);
\node[label={below:$H(v_1)$}] at (0,-1.6) (Hv1) {};
\draw[rounded corners, color=blue, fill=blue!5,fill opacity=0.3](4.7,-1.6) rectangle (5.3,-.6);
\node[label={below:$H(v_n)$}] at (5,-1.6) (Hv6) {};

\end{tikzpicture} 
    \caption{Paths are not EDP $2$-colorable.}
    \label{fig:path}
\end{figure}

\begin{proof}See Figure~\ref{fig:path}. Suppose $H$ is a plain cover for { $P_n$}. For all 
$i\in[n-1]$, let {$H(v_i,v_{i+1}):=(1~2)$.} Then all $H$-colorings $f$ satisfy $f(v_1)=\ldots=f(v_n)$, and so are not equitable.
\end{proof}
We will show in Lemma~\ref{path} that  $P_n$ is SEDP $k$-colorable  for every $k\geq 3$ and  $n\neq 3$.

\medskip
No cycle is DP $2$-colorable. All cycles are DP  and SEL $k$-colorable for all $k\ge3$. But:
\begin{example}[Cycles] For all cycles $C_n= v_1\dots v_nv_1$: 
\begin{enumerate}[label=(\alph{enumi})]
    \item  $C_3$  is neither EDP $3$-colorable nor EDP $4$-colorable;
    \item  $C_4$  is neither SEDP $3$-colorable nor EDP $4$-colorable; and    
    \item  $C_6$ is not EDP $3$-colorable.
\end{enumerate}
\end{example}

\begin{proof} For (a), suppose $n=3$. 
Let $H_3$ be the plain $3$-cover for $C_3$ with {$H_3(v_i,v_{i\oplus 1}):=(1~2)(3)$} for all $i\in[3]$. 
Any $H_3$-coloring of $C_3$ must only use two colors, and so is not { $\lceil\frac{|C_3|}{3}\rceil$-bounded}. 
Similarly, let $H_4$ be the plain $4$-cover for $C_3$ such that {$H_4(v_i,v_{i\oplus1}):=(1~2)(3~4)$} for all $i\in [3]$. Any $H_4$-coloring must only use two colors, and so is not { { $\lceil\frac{|C_3|}{4}\rceil$-bounded}}.

For (b), suppose $n=4$. { Let $H_3$ be the plain $3$-cover such that 
\[H_3(v_i,v_{i\oplus1}):=
\begin{cases}
(1~2~3)&\text{if $i$ is odd}\\
(1~3~2)&\text{if $i$ is even}.
\end{cases}\]
See Figure~\ref{fig:cycle}. Suppose $f$ is an SE $H_3$-coloring of $C_4$. Then some color $\alpha$ is used twice, and the other two colors are used once. By symmetry, assume $f(v_1)=1$, $|f^{-1}{\ZX (1)}|=2$, and $f(v_3)\ne1$. 
Then $f(v_4)=3$, and so $f(v_2)\notin \{2,3\}$. Thus $f(v_3)=1$ and $f(v_2)=3$, a contradiction.} 

Now let $H_4$ be the plain $4$-cover for $C_4$ such that,
{\[H_4(v_i,v_{i\oplus1}):=
\begin{cases}
(1~2~3)(4)&\text{if $i$ is odd}\\
(1~3~2)(4)&\text{if $i$ is even.}
\end{cases}\]}Suppose $f$ is a { $\lceil\frac{|C_4|}{4}\rceil$-bounded} $H_4$-coloring of $C_4$. It must use all four colors. 
 Say $f(v_4)=4$. Then $f$ is an { $\lceil\frac{|C_4|}{3}\rceil$-bounded} $H$-coloring of $v_1v_2v_3$, where $H$ is the plain $3$-cover of the path $v_1v_2v_3$ with $H(v_1,v_2):=(1~2~3)$ and $H(v_2,v_3):=(1~3~2)$. Say $f(v_2)=\alpha$. 
 Then $\alpha\ominus1\notin\{f(v_1),f(v_3)\}$, a contradiction.

For (c), let $n=6$. Let $H$ be the plain $3$-cover for $C_6$ such that,
{\[H(v_i,v_{i\oplus1}):=
\begin{cases}
(1~2~3)&\text{if $i\in\{1,3\}$}\\
(1~3~2)&\text{if $i\in \{2,4\}$}\\
(1)(2)(3)&\text{if $i\in \{5,6\}$}.
\end{cases}\]}Suppose $f$ is a { $\lceil\frac{|C_6|}{3}\rceil$-bounded} $H$-coloring of $C_6$. It must use each color twice. Say $f(v_3)=\alpha$. Then $f(v_2)\ne \alpha\oplus1$ and $f(v_4)\ne \alpha\oplus1$. There is $i\in\{2,4\}$ with $f(v_i)\ne \alpha$. Say $f(v_4)\ne\alpha$.Then $f(v_4)=\alpha\ominus1$, and so $f(v_5)\ne\alpha\oplus1$. Thus $f(v_1)=\alpha\oplus1=f(v_6)$, a contradiction. 
\end{proof}

\medskip
We think that we can prove for every $n\geq 7$ the cycle $C_n$ is EDP $3$-colorable.

\begin{figure}
    \centering
    \begin{tikzpicture}[scale =1]
\def \vt{circle (2.5pt) [fill]} 

\foreach \j in {0,1,2,3} {
    \draw (\j,0) \vt;
}
\foreach \j in {0,1,2,3} { \draw (\j,0)--(\j+1,0); } 
\draw (4,0) node[circle, draw, inner sep=2.5pt, fill=white] {};
\node[label={above:$v_1$}] at (0,0) (v1) {};
\node[label={above:$v_4$}] at (3,0) (v4) {};
\node[label={above:$v_1$}] at (4,0) (v1') {};

\node at (-0.7, 0) {$C_4$};

\def \svt{circle (2pt) [fill]}
\foreach \j in {0,1,2,3} { 
    \draw (\j,-.8) \svt;  
    \draw (\j,-1.4) \svt; 
    \draw (\j, -2) \svt;
}

\foreach \j in {0,2} {\draw (\j,-.8)--(\j+1,-1.4);\draw (\j,-1.4)--(\j+1,-2);\draw (\j,-2)--(\j+1,-.8);
}
\foreach \j in {1,3} {\draw (\j,-.8)--(\j+1,-2);\draw (\j,-1.4)--(\j+1,-.8);\draw (\j,-2)--(\j+1,-1.4);
}

\draw (4,-.8) node[circle, draw, inner sep=2pt,fill=white] {};
\draw (4,-1.4) node[circle, draw, inner sep=2pt,fill=white] {};
\draw (4,-2) node[circle, draw, inner sep=2pt,fill=white] {};
\node at (-0.7, -1.4) {$H$};

\draw[rounded corners, color=blue, fill=blue!5,fill opacity=0.3](-.3,-2.2) rectangle (.3,-.6);
\node[label={below:$H(v_1)$}] at (0,-2.4) (Hv1) {};

\draw[rounded corners, color=blue, fill=blue!5,fill opacity=0.3](2.7,-2.2) rectangle (3.3,-.6);
\node[label={below:$H(v_4)$}] at (3,-2.4) (Hv4) {};

\end{tikzpicture} 
    \caption{$C_4$ is not SEDP $3$-colorable witnessed by the cover $H$.}
    \label{fig:cycle}
\end{figure}

Every forest $G$ is DP $2$-colorable and equitably $(\lceil\frac{\Delta(G)}{2}\rceil+1)$-colorable. By Theorem~\ref{BoG}, every $n$-vertex forest with maximum degree at most $\frac{n+8}{3}$ is equitably $3$-colorable.
However:
\begin{example}[Forests] \label{ex3}~
\begin{enumerate}[label=(\alph{enumi})]
\item 
The tree $T:=u_1uu_2\cup v_1vv_2\cup uv$ (a balanced double star) is not EDP $3$-colorable;
\item for all $k\geq 1$, the forest $F_k:=K(u;u_1,\dots u_{k+1})+\sum_{i\in[k-1]}K(x_i;y_i)$ consisting of the sum of a star on $k+2$ vertices and $k-1$ disjoint edges is not EDP $3$-colorable.
\end{enumerate}
\end{example}

\begin{figure}
    \begin{tikzpicture}[scale =1] 
\def \vt{circle (2.5pt) [fill]} 
\draw (0,0) \vt;
\node[label={above:$u$}] at (0,0) (u) {};
\draw (1,0) \vt;
\node[label={above:$v$}] at (1,0) (v) {};
\draw (-.7,.7) \vt;
\node[label={left:$u_1$}] at (-.7,.7) (u1) {};
\draw (-.7,-.7) \vt;
\node[label={left:$u_2$}] at (-.7,-.7) (u2) {};
\draw (1.7,.7) \vt;
\node[label={right:$v_1$}] at (1.7,.7) (v1) {};
\draw (1.7,-.7) \vt;
\node[label={right:$v_2$}] at (1.7,-.7) (v2) {};
\draw (-.7,-.7)--(0,0)--(1,0)--(1.7,-.7);
\draw (-.7,.7)--(0,0);
\draw (1,0)--(1.7,.7);
\end{tikzpicture}
\hspace{2cm}
\begin{tikzpicture}[scale =1]
\def \vt{circle (2.5pt) [fill]}
\draw (0,0) \vt;
\node[label={above:$u$}] at (0,0) (u) {};
\draw (-.9,-1) \vt;
\node[label={below:$u_1$}] at (-.9,-1) (u1) {};
\draw (-.3,-1) \vt;
\node[label={below:$u_2$}] at (-.3,-1) (u2) {};
\draw (.3,-1) \vt;
\node[label={below:$u_3$}] at (.3,-1) (u3) {};
\draw (.9,-1) \vt;
\node[label={below:$u_4$}] at (.9,-1) (u4) {};

\draw (2,0) \vt;
\node[label={above:$x_1$}] at (2,0) (x1) {};
\draw (3,0) \vt;
\node[label={above:$x_2$}] at (3,0) (x2) {};
\draw (2,-1) \vt;
\node[label={below:$y_1$}] at (2,-1) (y1) {};
\draw (3,-1) \vt;
\node[label={below:$y_2$}] at (3,-1) (y2) {};
\draw (-.9,-1)--(0,0)--(-.3,-1);
\draw (.9,-1)--(0,0)--(.3,-1);
\draw (2,0)--(2,-1);
\draw (3,0)--(3,-1);
\end{tikzpicture} 
\caption{Forests $T$ (left) and $F_3$ (right) are not EDP $3$-colorable.}
\label{fig:forests}
\end{figure}

\begin{proof} 
See Figure~\ref{fig:forests}. For (a),  let $H$ be the plain $3$-cover of $T$ with $H(u,u_1)=H(u_2,u)=H(v,v_1)=H(v,v_2)=H(u,v)=(1~2~3)$.
Suppose $f$ is an equitable $H$-coloring of $T$. Then each color class has size two.
By symmetry, assume $f(v)=1$. Then $f(x)\neq 2$ for $x\in\{v_1,v_2,u\}$, so $f(u_1)=f(u_2)=2$. Thus $f(u)\ne3$ and $f(u)\ne1$, a contradiction.

For (b), let  $H$ be the plain $3$-cover of $F_k$ with $H(u,u_i)=(1~2~3)$ for all $i\in [k+1]$ and $H(x_i,y_i)=(1)(2)(3)$ for all $i\in [k-1]$.
Suppose $f$ is an equitable $H$-coloring of $C$.
Then each color class has size $k$. By symmetry, assume $f(u)=1$. Then $f(u_i)\ne 2$ for $i\in[k+1]$. Thus there is $i\in [k-1]$ with $f(x_i)=2=f(y_i)$,  a contradiction.
\end{proof}

Example~\ref{ex3} shows that Theorem~\ref{BoG} does not directly extend to equitable DP  coloring. Theorem~\ref{t13} in Section~\ref{sec4} proves that for each $D\geq 4$ every
forest with maximum degree at most $D$ apart from the star $K_{1,D}$ is SEDP-$k$-colorable for every $k\geq D$.

\medskip
For every $k\geq n$ every $n$-vertex graph is SEL  $k$-choosable and DP  $k$-colorable. However:

{ \begin{example}[On $n$-vertex, $d$-degenerate graphs]\label{ex:4} 
The $n$-vertex, 
$d$-degenerate graph $G_{n,d}:=K(D)\vee\overline{K}(S)$, where $|D|=d$ and $|S|=n-d$, is not EDP $k$-colorable for 
\[n\leq k\leq n+d-1-((n+d-1)\bmod 2).\] 
{In particular, 
if $n\leq k\leq 2n-2$, then  $K_n=G_{n,n-1}$ is not EDP $k$-colorable; if in addition, $n=2s+1$ then $K_{1,2s}=G_{n,1}$ also is not EDP $n$-colorable.}
 \label{exkn}
\end{example}

\begin{proof} 
Let $H$ be the plain $k$-cover of $G_{n,d}$ such that for all edges $uv\in G_{n,d}$, 
\[H(u,v):=
\begin{cases}
 (1~2)(3~4)\dots(k-1~~k)& \text{if $k$ is even}\\
 (1~2)(3~4)\dots(k-2~~k-1)(k)& \text{if $k$ is odd.}    
\end{cases}
\]
{ See Figure~\ref{fig:ex4}.} Suppose $f$ is an equitable $H$-coloring of $G_{n,d}$. 
As $k\ge n$, every color class has size at most one, and no vertex in $S$
can be colored with any color in  \[\{f(v):v\in D\}\cup \{H(u,v)(f(v)):v\in D\}.\] 
If $H(u,v)(f(v))=f(v)$ then $f(v)=k$ and $k$ is odd. Thus after coloring $D$, there are at most $k-2d+(k\bmod2)\le n-d-1$ colors left for the  $n-d$ vertices in $S$, a contradiction. 
\end{proof}}

\begin{figure}
    \centering
\begin{tikzpicture}[scale=1]
\def \vt{circle (2.5pt) [fill]} 
\foreach \j in {-1.5,-.5,.5} { \draw (0,\j) \vt; }
\draw (0,-1.5)--(0,-.5)--(0,.5);
\draw (0,-1.5) .. controls (-1,-1.5) and (-1,.5) .. (0,.5);
\foreach \j in {-2,-1,0,1} {\draw (1.5,\j) \vt;}
\foreach \i in {-1.5,-.5,.5} {
        \foreach \j in {-2,-1,0,1} {
            \draw (0,\i) -- (1.5,\j);
        }
    }

\draw[rounded corners, color=red, fill=red!5,fill opacity=0.3](-.2,.7) rectangle (.2,-1.7);
\node[red!60!black] at (0,1) {\large\textbf{D}};

\draw[rounded corners, color=green, fill=green!5,fill opacity=0.3](1.3,-2.2) rectangle (1.7,1.2);
\node[green!60!black] at (1.5,1.5) {\large\textbf{S}};
\node[label={below:$G_{7,3}$}] at (.75,-2) (G73) {};

\begin{scope}[xshift=3cm, yshift=-.3cm]
\def \vt{circle (2pt) [fill]} 

\foreach \j in {0,1,2,3,4,5,6,7} {
    \draw (3,1.2-.4*\j) \vt;
    \draw (4,1.2-.4*\j) \vt;
}
\foreach \j in {0,2,4,6} {
    \draw (3,1.2-.4*\j)--(4,.8-.4*\j);
    \draw (3,.8-.4*\j)--(4,1.2-.4*\j);
}
\draw[rounded corners, color=blue, fill=blue!5,fill opacity=0.3](2.8,1.4) rectangle (3.2,-1.8);
\node[label={below:$H(uv)$}] at (3.5,-1.7) (Huv) {};
\draw[rounded corners, color=blue, fill=blue!5,fill opacity=0.3](3.8,1.4) rectangle (4.2,-1.8);
\end{scope}

\end{tikzpicture}
    \caption{$G_{7,3}=K(D)\vee\overline K(S)$ is not equitably  $H$-colorable, where $H$ is the $8$-cover for $G_{7,3}$ with
    $H(u,v)=(1~2)(3~4)(5~6)(7~8)$ for all edges $uv\in G_{7,3}$.}
    \label{fig:ex4}
\end{figure}
Proposition~\ref{Easy} below implies that for all $n>d\geq 0$ every
$n$-vertex $d$-degenerate graph is SEDP-$k$-colorable for every $k\geq n+d$.

\medskip

For all positive integers $n$ and all $k\geq n+1$, the complete bipartite graph $K_{n,n}$ is DP $k$-colorable and equitably $k$-colorable. It is conjectured that it is also SEL  $k$-colorable.
But:
\begin{example}[Balanced complete bipartite graphs]\label{ex:5}   $G:=K_{n,n}$ is not  EDP $2n$-colorable. 
\end{example}

\begin{proof} 
Let $G:=K_{n,n}(U;W)$.
 Let $H$ be the plain $2n$-cover for $G$ with $H(u,w)=(1~2~\dots~2n)$ for all edges $uw$ { with $u\in U$}.
Suppose $G$ has a { $1$-bounded} $H$-coloring $f$. 
Then every color class has size one. 
By symmetry, assume there is $u\in U$ with $f(u)=1$. Let $\beta$ be the least color such that $f^{-1}(\beta)\cap W\ne\emptyset$. Then no vertex is colored with $\beta-1$, a contradiction.
\end{proof}

By Example~\ref{ex:5} there are graphs $G$ with $|G|=2n$ and $\Delta(G)=n$ that are not  EDP $2n$-colorable. 
For every $k\geq 1$ every graph $G$ with $\Delta(G)\leq k$ is DP $(k+1)$-colorable. By Hajnal-Szemer\'edi Theorem, every such graph is equitably $(k+1)$-colorable. 
It is conjectured that every such graph is also SEL  $(k+1)$-colorable. However: 

\begin{example}[Graphs with given maximum degree] 
For all positive integers $k$, there are infinitely many graphs $G$ with 
$\Delta(G)\leq k$ 
that are not
 EDP $(k+1)$-colorable.
\end{example}

\begin{proof} 
Let $G_s=\sum_{i=0}^sF_i$, where each $F_i=K_{\Delta+1}$.
Let $H$ be a plain $(\Delta+1)$-cover of $G_s$ such that {$H[G_i]$} is normal for all $i\in [s]$, but  {$H[F_0]$} is as in Example~\ref{exkn}.
\end{proof}
In Section~\ref{sec5} we show that every graph $G$ is SEDP $k$-colorable for every
$k\geq 3(\Delta(G))^2$.

\medskip
{ 
For any $s\geq 1$, the star $K_{1,2s}$  is SEL  $(s+1)$-colorable and DP $(s+1)$-colorable.
But:
\begin{example}[Stars]\label{ex7}  The star $G_s:=K(\{x\},L)$  is not  SEDP $k$-colorable if $k\le n:=|G_s|$.
\end{example}

\begin{proof} 
Let $H$ be the plain $k$-cover for $G_s$ with $H(x):=[k]$ and $H(x,l):=(1~2~\dots~k)$ for all $l\in L$. 
Suppose  $f$ is an $H$-coloring of $G_s$.
Then $f(l)\ne \alpha:= f(x)\oplus1$  for all $l\in L$. Thus the remaining $k-1$ colors must be used on $n$ vertices. So some color $\beta$ is used at least twice. Since $f$ is plain, $\alpha$ and $\beta$ witness that $f$ is not strongly equitable. 
\end{proof}
}

 A partial case of Proposition~\ref{Easy} below yields that the star $K_{1,s-1}$
 is SEDP-$k$-colorable for every $k\geq s+1$.

 \section{Upper bounds for $d$-degenerate graphs}
 
 { In this section we study the SEDP $k$-colorability of $n$-vertex graphs, where $k$ is usually at least $n$.
 In the next two sections, while arguing inductively, we often apply these techniques to the last $k$ vertices of a graph.} 

Fix a graph $G=(V,E)$ and a $k$-cover $H$ for $G$ with $n:=|G|\le k$.
Now:
\begin{equation}\label{nlek}
\parbox{14cm}{\center{If $n\le k$ then an $H$-coloring $f$ of $G$ is SE if and only if  it is injective.}} 
\end{equation}
Suppose $f$ is an injective $H$-coloring of $G_0\subset G$. We can try to iteratively extend $f$ to an SE $H$-coloring of $G$. For this, consider a vertex $v\in G-G_0$ and  color sets:
\begin{enumerate} 
    \item $B(v)=B^f(v):=\{H(w,v)(f(w)):w\in N(v)\cap G_0\}$, 
    \item $R=R^f:=\ra(f)$, and  
    \item $A(v)=A^f(v):=H(v)\smallsetminus (B^f(v)\cup R^f)$.
\end{enumerate}
Say that the colors in $B(v)$ are \emph{blocked at $v$}, the colors in $R$ are \emph{used}, and the colors in $A(v)$ are \emph{available for $v$}.  
A color $\alpha\in B(v)$ is \emph{blocked by $u\in \dom(f)$} if $H(u,v)(f(u))=\alpha$. 
Now $f+(v,\alpha)$ is an SE $H$ coloring of $G[G_0+v]$ if and only if $\alpha \in A^f(v)$. 
When $G-G_0$ is independent, it suffices to find a system of distinct representatives (sdr) for $\{A^f(v):v\in V(G- G_0)\}$.
Thus, by Hall's Theorem, it suffices to check
\begin{equation}\label{IND}
\parbox{15cm}{\center{$G-G_0$ is independent and $|S|\le |\bigcup_{w\in S}A^f(w)|$  for all $S\subseteq V(G-G_0)$.
}}
\end{equation}

When $G-G_0$ is not independent we must work harder. 
We often do our iterative construction based on a degenerate ordering of $V$. 
Consider an ordering $\sigma=(v_1,\dots,v_n)$ of $V$ and a coloring $f$ of $G$.  
Set $G_i:=G[v_1,\dots,v_i]$. 
We intend that $f_i$ is $f$ restricted to $G_i$, but we may use this notation before $f$ is fully defined. So when we complete the construction, we will have $f=\bigcup f_i$. 
When $f_i$ is defined, 
 we shorten our notation to $A_i:=A^{f_{i-1}}(v_i)$, $B_i:=B^{f_{i-1}}(v_i)$, and $R_i:=R^{f_{i-1}}$. 

The $\sigma$-First-Fit algorithm ($\sigma$-FF) attempts to find an injective $H$-coloring $f$ of $G$ by iteratively picking $f_i(v_i)\in A_i$ in the order $\sigma$. The algorithm succeeds for  some sequence of choices $f_i\in A_i$ if and only if $f$ is an SE $H$-coloring of $G$. In this case, $f$ is called a $\sigma$-FF coloring. We say that $G$ is $\sigma$-FF SE $H$-colorable if $\sigma$-FF succeeds for all such choices.

  \begin{prop}\label{Easy} Suppose $0\leq d< n$, $k\geq n+d$,  $G$ is an $n$-vertex  graph, and
  $\sigma=(v_1,\ldots,v_n)$ is a $d$-degenerate ordering of $G$. Then $G$ is $\sigma$-FF SE $H$-colorable. 
  
\end{prop}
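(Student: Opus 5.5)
The plan is a direct counting argument showing that at every step of $\sigma$-FF the set $A_i$ of available colors is nonempty, whatever choices were made before. Then the algorithm never gets stuck, so every run produces an $H$-coloring $f$ of $G$. Each chosen color is taken from $A_i\subseteq H(v_i)\smallsetminus R_i$, so it is distinct from all previously used colors. Hence $f$ is injective, and by \eqref{nlek} (which applies since $n\le n+d\le k$) $f$ is an SE $H$-coloring of $G$.

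Fix $i\in[n]$ and suppose $f_{i-1}$ is an arbitrary injective $H$-coloring of $G_{i-1}$ produced by $\sigma$-FF. By definition $A_i=H(v_i)\smallsetminus(B_i\cup R_i)$ and $|H(v_i)|=k$. We bound the two removed sets separately.

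\emph{The used colors.} We have $|R_i|=|\ra(f_{i-1})|=i-1\le n-1$.

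\emph{The blocked colors.} Each color in $B_i$ has the form $H(w,v_i)(f(w))$ for some $w\in N(v_i)\cap\{v_1,\dots,v_{i-1}\}$. Each such $w$ blocks at most one color, because $H(w,v_i)$ is a partial injection and $f(w)$ is a single color. Since $\sigma$ is $d$-degenerate, there are at most $d$ such neighbors, so $|B_i|\le d$.

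Combining these, $|A_i|\ge k-(n-1)-d\ge (n+d)-(n-1)-d=1$. So $A_i\neq\emptyset$ for every $i$, regardless of earlier choices, and this is exactly the statement that $G$ is $\sigma$-FF SE $H$-colorable.

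There is no real obstacle here. The only points needing care are to note that one neighbor blocks at most one color (the matching property of a cover), and to invoke \eqref{nlek} so that injectivity is equivalent to strong equitability. The slack in the count is exactly one color, which is why the hypothesis $k\ge n+d$ is used in full.
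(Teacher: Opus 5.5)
Your proof is correct and is the same counting argument as the paper's: $|R_i|=i-1$, each earlier neighbor blocks at most one color so $|B_i|\le d$, and hence $|A_i|\ge k-(i-1)-d\ge 1$ at every step, whatever the earlier choices were. You weaken $i-1$ to $n-1$, which costs nothing here, but the paper keeps the per-step bound $|A_i|\ge k-(i-1)-d$ because it reuses it later in the proof of Theorem~\ref{n+d-1}.
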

\begin{proof}
    Arguing by induction on $i\in [n]$, it suffices to show  $|A_i|>k-(i-1)-d$. Since each node $v_j\in N(v_i)$ with $j\in[i-1]$ blocks at most one color in $B(v_i)$, this follows from 
    \[|A_i|=|H(v_i)|- |R_i|-|B_i|\ge (n+d)-(i-1)-d\ge 1.\qedhere\]
\end{proof}

Example~\ref{ex:4} presents a graph $K_d\vee \overline K_{n-d}$ showing that this bound is sharp for all
$d\in [n-1]$ with $n+d-1$ even. We will prove a Brooks-type result, showing that there are no other examples when $k=n+d-1$ and $d>1$.
But first we need a definition.

For an edge $uv\in E$, $H(u,v)$ is a \emph{derangement} if {$H(u)\subseteq H(v)$}, $H(u,v)$ is defined on $H(u)$, and  $H(u,v)(\alpha)\ne\alpha$ for all $\alpha\in H(u)$.
A cover $H$ is \emph{constant} if it is plain and $H(u,v)=H(u',v')$ for all $uv,u'v'\in E$. 
Note that if $H$ is constant then $H(u,v)=H(v,u)=H^{-1}(u,v)$, so $H(u,v)$ has order $2$. 
If, in addition, $H(u,v)$ is a derangement, then $H(u,v)=(\alpha_1~\alpha_2)\dots(\alpha_{k-1}~\alpha_k)$ for some ordering $\alpha_1,\dots,\alpha_k$ of $H(u)$. 
In particular, $k$ is even.

\begin{thm}\label{n+d-1} Suppose $1\leq d< n$, $k= n+d-1$,
and $G=(V,E)$ is an $n$-vertex $d$-degenerate graph  with a $k$-cover $H$. If $G$ is not
SE $H$-colorable, then each of the following holds:
\begin{enumerate}[label=(\alph{enumi})]
    \item  $G=K_d\vee \overline K_{n-d}$;

\item  $H$ is plain and $H(u,v)$  is a derangement for all $uv\in E$;

\item for all $x\in V$ and $y,z\in N(x)$, $H(y,x)=H(z,x)$;

\item  if $d\geq 2$ then $H$ is { constant (so with (b), $k$ is even).
}
\end{enumerate}
\end{thm}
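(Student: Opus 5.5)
The approach is a $\sigma$-First-Fit count followed by a study of tightness. Fix a $d$-degenerate ordering $\sigma=(v_1,\dots,v_n)$. Since $G$ is not SE $H$-colorable, by \eqref{nlek} no injective $H$-coloring exists. As in the proof of Proposition~\ref{Easy}, $|A_i|\ge k-(i-1)-|B_i|\ge n+d-i-|B_i|\ge 1$ for all $i<n$. Hence $\sigma$-FF can always color $v_1,\dots,v_{n-1}$, for every sequence of choices. Failure at $v_n$ forces, for \emph{every} injective $H$-coloring $g$ of $G-v_n$, that $|A^g(v_n)|=0$. This means $v_n$ has exactly $d$ back-neighbors, the $d$ blocked colors are distinct, they are disjoint from $R^g$ (which has size $n-1$), and together these exhaust $H(v_n)$. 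In particular $B^g(v_n)\cap R^g=\emptyset$ and $H(v_n)=R^g\cup B^g(v_n)$.

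Since the last vertex of a $d$-degenerate ordering can be taken to be any vertex of degree at most $d$, the same conclusion holds for every vertex $x$ with $d(x)\le d$. Every such $x$ has degree exactly $d$, and for every injective coloring $g$ of $G-x$ we get $H(x)=\ra(g)\cup\{H(y,x)(g(y)):y\in N(x)\}$, with the union disjoint.

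\textbf{Step 1: plainness and derangements.} Take $x$ of minimum degree and a neighbor $y$. Recolor $y$ within $G-x$ (the remaining vertices keep slack by the count above, so we can vary $g$). Each used color $g(y)$ must lie in $H(x)$, and $H(y,x)(g(y))$ must be defined, lie in $H(x)$, and differ from all used colors, in particular from $g(y)$. Varying $g$ over all injective colorings, which is possible because First-Fit has at least two choices at most steps, shows that $H(y)\subseteq H(x)$, that $H(y,x)$ is total on $H(y)$, and that $H(y,x)$ has no fixed point. Repeating the argument for non-neighbors $z$ shows $H(z)\subseteq H(x)$. Equal list sizes then give plainness, and $H(y,x)$ is a derangement.

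\textbf{Step 2: (c).} Given $y,z\in N(x)$, choose $g$ with $g(y)=\alpha$, and then a second coloring swapping the colors so that $g'(z)=\alpha$. Disjointness of blocked and used colors, together with exhaustion of $H(x)$, forces $H(y,x)(\alpha)=H(z,x)(\alpha)$. If $H(y,x)(\alpha)\neq H(z,x)(\alpha)$, exhaustion fails: some color is neither used nor blocked, so it is free for $x$.

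\textbf{Step 3: structure (a).} Let $S$ be the set of vertices of degree $d$. A counting argument on edges ($\|G\|\le dn-\binom{d+1}{2}$ for $d$-degenerate graphs) together with the tightness conditions shows that the vertices of degree $d$ all share one neighborhood $D$. Then $D$ is a clique, since otherwise we could order so that some vertex has fewer than $d$ back-neighbors, giving slack. Hence $G=K(D)\vee\overline K(S)$. This step is the main obstacle. I would attempt it by taking the last vertex $v_n$ and the second-to-last $v_{n-1}$ under various orderings: if two degree-$d$ vertices had different neighborhoods, color one of them last and use an extra free color at another to produce a coloring with slack. I expect careful case analysis to be needed here.

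\textbf{Step 4: (d).} For $d\ge2$, (c) applied at a vertex of $D$, whose neighborhood contains both $S$ and $D$, links the maps across all edges. Involutivity follows from $H(u,v)=H(v,u)^{-1}$ on the clique $D$ when $|D|\ge2$, giving a constant cover.
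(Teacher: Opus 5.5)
Your opening is the same as the paper's: $\sigma$-FF always colors $G-v_n$, and $d(v_n)=d$. Since any vertex of degree at most $d$ can be moved to the end, every injective $H$-coloring $g$ of $G-x$ satisfies $|B^g(x)|=d$, $B^g(x)\cap\ra(g)=\emptyset$ and $H(x)=\ra(g)\cup B^g(x)$. The content of the theorem, however, is turning this tightness into structure, and that part is missing.

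\textbf{Step~3 has no mechanism.} Edge counting cannot work: for $d=1$ every $n$-vertex tree attains $\|G\|=dn-\binom{d+1}{2}$, yet the conclusion is that $G$ is a star. You also never say how tightness enters. The idea you need is the paper's key claim: if $v_i\in N(v_n)$ and $j\ne i$, then $v_jv_i\in E$ and $H(v_j,v_i)=H(v_n,v_i)$.
\begin{itemize}
\item It is proved by taking the least failing $j$ and a witness color $\beta$, then building an injective coloring of $G-v_n$ with $f(v_j)=\beta$ and $f(v_i)=H(v_n,v_i)(\beta)$. Then $\beta$ is simultaneously used and blocked at $v_n$, a contradiction.
\item Building that coloring requires reordering. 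Minimality of $j$ puts every $v_{j'}$ with $j'<j$, $j'\ne i$, into $N(v_i)$. Hence $\min\{i,j\}\le d+1$, and that vertex can be moved to position $1$ by permuting the first $d+1$ vertices.
\item When $i=n-1$, one first moves a leaf of the forest $G-(N(v_n)-v_i)$ into position $n-1$.
\end{itemize}
With this claim, (a) is immediate, since $N[v]=V$ for $v\in N(v_n)$ and $\omega(G)\le d+1$. The same claim gives (c) at the clique vertices.

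Your Steps~1--2 only use tightness at $x$, so they only reach vertices of degree $d$. When $d<n-1$, the clique vertices have degree $n-1>d$ and can never be placed last. Yet Step~4 invokes (c) at them. Also, the derangement property on edges inside $D$ is never established; the paper gets it by coloring $u,v\in D$ with $\alpha$ and $H(u,v_n)(\alpha)$.

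\textbf{The steps you do argue are not sound as written.}

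In Step~2, comparing a coloring with $g(y)=\alpha$ to a different coloring with $g'(z)=\alpha$ cannot contradict exhaustion, because exhaustion constrains one coloring at a time. What works is a single coloring with $g(y)=H(x,y)(\beta)$ and $g(z)=H(x,z)(\beta)$, so $\beta$ is blocked twice and $|B(x)|<d$. If $H(y,z)$ forbids this pair, take $g(z)=\beta$ instead, so $\beta$ is used and blocked. Extending such a two-vertex precoloring needs $y,z$ at the front of a $d$-degenerate ordering, which the paper gets from (a).

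In Step~1, ``First-Fit has at least two choices at most steps'' hides exactly the tight case. That case is $y=v_{n-1}$ with $v_{n-2}\sim y$ of back-degree $d$. There $|A_{n-2}|$ may be $2$, while $v_{n-2}$ must avoid both $\alpha$ and $H(y,v_{n-2})(\alpha)$. The paper gets plainness by choosing the least index whose list has a color outside $H(v_n)$, so earlier vertices need to avoid only one color. It handles the tight case by moving a non-neighbor of $v_{n-1}$ to position $1$ and coloring it $H(v_{n-1},v_{n-2})(\alpha)$.
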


\begin{proof} 
Suppose the theorem fails for a  $d$-degenerate ordering $\sigma=(v_1,\ldots,v_n)$ of $G$.
By Proposition~\ref{Easy}, $G_{n-1}$ is $\sigma$-FF SE $H$-colorable. We will obtain a contradiction by constructing a $\sigma$-FF $H$-coloring $f'=f_{n-1}$  of $G_{n-1}$ with $A_n\ne \emptyset.$ Then
\begin{equation}\label{eq:Ai}
    |A_{i}|=|H(v_{i})\smallsetminus (R_{i}\cup B_{i})|\ge(n+d-1)-(i-1)-d=n-i.
\end{equation}
As $\sigma$ is $d$-degenerate, $d(v_n)\leq d$. If $d(v_n)<d$, then  \eqref{eq:Ai} overestimates $|B_n|$ by $1$, so $|A_n|\ge 1$, a contradiction. Thus
\begin{equation}\label{d=d}
    d(v_n)=d.
\end{equation}
We will derive (a--d) in a series of claims. One of our techniques will be to modify $\sigma$ in a way that preserves $d$-degeneracy. Note that 
\begin{equation}\label{eq:shift}
 \parbox{14cm}{{\center\em if $d(v_i)\le d$ then moving $v_i$ to a higher position does not increase degeneracy above $d$; neither does permuting $\{v_1,\dots,v_{d+1}\}$.}}   
\end{equation}
Another technique will be to place additional constraints on early $A_i$ to improve later $A_j$.

First we show that
\begin{equation}\label{eq:plain}
\parbox{14cm}{\center{ \em $H$ is plain.
}}
\end{equation}
Suppose not. Let $i\in [n-1]$ be the minimum index for which there is a color $\alpha\in H(v_i)\smallsetminus H(v_n)$.
Pick $f_{i-1}$ so that for all $j<i$, $f_j(v_j)\in A_j-H(v_i,v_j)(\alpha)$. 
This is possible by \eqref{eq:Ai}.
As $\alpha\notin \bigcup H(v_j)$, $\alpha\notin R_i$. 
By the construction, $\alpha\notin B_i$. 
As $\alpha\in H(v_i)$,  it is in $A_i$.
Pick $f(v_i)=\alpha$, and extend $f(v_i)$ by $\sigma$-FF to $f'$.
As $\alpha\in R_n\smallsetminus H(v_n)$, $A_n\ne\emptyset$, a contradiction.
 This proves \eqref{eq:plain}. 
 
 Let $L$ be the common list of all vertices in $G$.
Next, we show  that (b) holds for edges $v_iv_n$:
\begin{equation}\label{alp}
\parbox{14cm}{ for all $v_i\in N(v_n)$ and $\alpha\in L$, $H(v_i,v_n)(\alpha)$ is defined and 
$H(v_i,v_n)(\alpha)\ne \alpha$.
}
\end{equation}

Suppose not. Pick  $v_i\in N(v_n)$ and $\alpha\in L$ with
 $H(v_i,v_n)(\alpha)=\alpha$ or  undefined.  
We will obtain a contradiction by coloring $v_i$ with $\alpha$---then $R_n\cap B_n\ne \emptyset$ or $|B_n|<d$, so  $A_n\ne\emptyset$ by \eqref{eq:Ai}. 
For this, we show that (*) $A_j\smallsetminus \{\alpha,H(v_i,v_j)(\alpha)\}\ne \emptyset$, if $j<i$.
By \eqref{eq:Ai}, (*) holds unless:
\begin{equation}\label{eq:alp+}
    \parbox{14cm}{ $i=n-1$, $j=n-2$, $d_{G_j}(v_j)=d$, and  $H(v_iv_j)(\alpha)=f(v_j)=:\beta$ (so $v_iv_j\in E$).}
\end{equation}
By degeneracy and \eqref{eq:alp+}, there is $j'\le d<j$ with $v_{j'}\notin N(v_i)$. 
By \eqref{eq:shift}, the order of the first $d$ vertices does not affect $d$-degeneracy, so assume $j'=1$. 
Using \eqref{eq:plain}, let $f(v_1)=\beta$. 
Now $H(v_i,v_j)(\alpha)=\beta\notin A_j$. 
As $|A_j|\ge 2$,  (*) holds. 
This proves \eqref{alp}.

Next we prove a special case of (c).
 \begin{equation}\label{Alp'2}
\parbox{14cm}{\center If $v_i\in N(v_n)$ and $j\in[n]-i$, then  $v_jv_i\in E$ and
$H(v_j,v_i)=H(v_n,v_i)$.
}
\end{equation}
Suppose not.  
Consider the least $j$ with $v_j\notin N(v_i)$ or $H(v_j,v_i)(\beta)\ne H(v_n,v_i)(\beta):= \alpha$.
It suffices to construct $f'$   
so that $f(v_i)=\alpha$ and $f(v_j)=\beta$: Then $\beta\in R_n\cap B_n$, so $A_n\ne \emptyset$.
Now $\{v_{j'}:j'< j\}\subseteq N(v_i)$. By degeneracy, if $j<i$ then $j-1\le d$; else $i<j$ and  $i\le d+1$. 

 Suppose $i<j$. By \eqref{eq:shift} and $i\le d+1$,   assume $i=1$. 
Set $f(v_1):=\alpha$. 
Let $1<j'<j$.
As $H(v_1,v_{j'})(\alpha)=\beta$, $f(v_{j'})\ne \beta$, so 
 $\beta\notin R_j$. 
As $j'<j<n$,  $j'\le n-2$, so $|A_{j'}|\ge2$.
Iteratively, pick $f(v_{j'})\in A_{j'}-H(v_j,v_{j'})(\beta)$. 
Now $\beta\notin B_j$. 
So $\beta \in A_j$; setting $f(v_j)=\beta$, we are done.

Otherwise, $j<i$. By \eqref{eq:shift} and $j\le d+1$,  assume $j=1$. 
Set $f(v_1):=\beta$. 
Let $1<i'<i$. 
If $i\le n-2$, then $|A_{i'}|\ge3$. 
Iteratively color $G_{i-1}$ so that $f(v_{i'})\in A_{i'}\smallsetminus \{\alpha,H(v_i,v_{i'})(\alpha)$\}. Setting $f(v_i)=\alpha$, we are done.

Else, $i=n-1$. By the previous paragraph, it suffices to find a $d$-degenerate permutation of $(v_1,\dots,v_{n-1})$ that moves $v_{n-1}$. 
If $n\le d+2$ then move $v_i$ to the first position.
Else, if $v_{i'}\in K:=N(v_n)-v_i$ then $N[v_{i'}]=V$, 
so $K\subseteq\{v_1,\dots,v_{d+1}\}$; assume $K=\{v_1, \dots,v_{d-1}\}$. 
Set $X:=G-K$. Now, $X$ is $1$-degenerate and $v_iv_n\in E(X)$.
Thus $X$ is acyclic and has at least two leaves; one, say $v$, is not $v_n$. 
By \eqref{eq:shift} and $d_G(v)=|K|+d_X(v)=d$, we can move $v$ to the position between $v_i$ and $v_n$ in $\sigma$. Now $v_i$ is in the $n-2$ position. This proves \eqref{Alp'2}.

\medskip
Set $N:=N(v_n)$ and $Y:=V\smallsetminus N$. 
By \eqref{Alp'2},  $N[v]=V$  for all $v\in N$;
 by \eqref{d=d},  $|N|= d$. 
As $G$ is $d$-degenerate, $\omega(G)\le d+1$, so $Y$ is independent. 
Now $G=K(N)\vee \overline{K}(Y)$.
This proves (a). 
Now  $N\subseteq \{v_1,\dots,v_{d+1}\}$  because $\sigma$ is a $d$-degenerate ordering of $V$.

Suppose (b) fails. By \eqref{eq:plain}, $H$ is plain. Thus there are $uv\in E$ and $\alpha\in L$ with $H(u,v)(\alpha)=\alpha$ or  $H(u,v)(\alpha)$ undefined.  
By \eqref{alp}, $u,v\in N$. Assume $u=v_1$, $v=v_2$, and $H(u,v_n)(\alpha)=\beta$; 
by \eqref{alp}, $\beta$ exists and is not $\alpha$. 
Set $f(v_1)=\alpha$ and $f(v_2)=\beta$. 
Now $\beta\in R_n\cap B_n$, so $A_n\ne\emptyset$, a contradiction. This proves (b).

For (c), consider $x\in V$ and $y,z\in N(x)$. Suppose  $x\in N$. Assume $x=v_1$. By \eqref{Alp'2},
 $H(y,x)=H(v_n,x)=H(z,x)$.
Else, $x\in Y$, and $y,z\in N$. Assume $x=v_n$, $y=v_1$, and $z=v_2$.
Suppose $\alpha:=H(x,y)(\beta)\ne H(x,z)(\beta)=:\gamma$. 
If $H(y,z)(\alpha)\ne \gamma$ then set $f(y):=\alpha$ and $f(z):=\gamma$. 
Now $|B_n|\le d-1$, since $\beta$ is blocked at $x=v_n$ by both $y$ and $z$, so $f$ can be extended to $G$. 
Else $H(y,z)(\alpha)=\gamma$. 
Set $f(y):=\alpha$ and $f(z):=\beta$. Now $\beta\in R_n\cap B_n$, so $f$ can be extended to $G$. 
Anyway, we have a contradiction.  This proves (c).

{ For (d), suppose $d\ge 2$, and consider an edge $xy$. We must show that $H(x,y)=H(y,x)$. By (a), $xy$ is contained in a triangle $xyzx$. Using (c),  \[H(x,y)=_{\textrm (c)}H(z,y)=H^{-1}(y,z)=_{\textrm (c)}H^{-1}(x,z)=H(z,x)=_{\textrm (c)}H(y,x).\qedhere\] }

\end{proof}

{ Recall that forests are  $1$-degenerate graphs. In this case, 
Theorem~\ref{n+d-1} holds for fewer colors, if we only require an EDP coloring.}

\begin{cor}\label{n-1} Let  $k\geq \frac{n+2}{2}$.
If an $n$-vertex forest $G$ with a $k$-cover $H$ is not 
equitably $H$-colorable, then each of the following holds:
\begin{enumerate}[label=(\alph{enumi})]
\item  $k=n$ and $G=K_{1,n-1}~(=K_1\vee \overline{K}_{n-1})$;
\item  $H$ is plain and $H(u,v)$  is a derangement for all $uv\in E$;
\item for all $x\in V$ and $y,z\in N(x)$, $H(y,x)=H(z,x)$.
\end{enumerate}
\end{cor}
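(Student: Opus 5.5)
The plan is to split according to the value of $m=\lceil n/k\rceil$, which is either $1$ or $2$ under the hypothesis $k\ge\frac{n+2}{2}$. The case $k\ge n$ (where $m=1$) reduces to Proposition~\ref{Easy} and Theorem~\ref{n+d-1} with $d=1$. The case $\frac{n+2}{2}\le k<n$ (where $m=2$) is handled directly by a greedy argument, which will show that a counterexample cannot occur there at all. The case $n=1$ is trivial, so assume $n\ge 2$; then a forest is $1$-degenerate with $1=d<n$.

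\emph{Case $k\ge n$.} Here $m=1$, so by \eqref{nlek} an $m$-bounded $H$-coloring is the same as an injective one, which is the same as an SE $H$-coloring. Hence ``equitably $H$-colorable'' and ``SE $H$-colorable'' coincide.
If $k\ge n+1=n+d$, Proposition~\ref{Easy} applied to a $1$-degenerate ordering of $G$ gives an SE $H$-coloring, contradicting the assumption. So $k=n=n+d-1$.
Theorem~\ref{n+d-1} with $d=1$ then gives all three conclusions:
\begin{itemize}
\item part (a) of the theorem gives $G=K_1\vee\overline K_{n-1}=K_{1,n-1}$, which together with $k=n$ is (a) of the corollary;
\item parts (b) and (c) of the theorem are exactly (b) and (c) of the corollary.
\end{itemize}

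\emph{Case $\frac{n+2}{2}\le k<n$.} Here $n/k\le\frac{2n}{n+2}<2$, so $m=2$. I will show that $G$ is always $2$-bounded $H$-colorable, so this case cannot occur.
\begin{itemize}
\item Take a $1$-degenerate ordering $\sigma=(v_1,\dots,v_n)$ of the forest, so each $v_i$ has at most one earlier neighbor.
\item Color greedily along $\sigma$, requiring only that the coloring stays proper in $H$ and that no color is used more than twice.
\item When $v_i$ is reached, at most one color of $H(v_i)$ is blocked, namely the one blocked by the earlier neighbor.
\item The colors already used twice number at most $\lfloor (i-1)/2\rfloor\le\lfloor (n-1)/2\rfloor$, since $i-1$ vertices have been colored.
\end{itemize}
So the number of admissible colors for $v_i$ is at least
\[k-1-\Big\lfloor\frac{n-1}{2}\Big\rfloor\ \ge\ \frac{n+2}{2}-1-\frac{n-1}{2}=\frac12 .\]
Since this count is an integer, it is at least $1$. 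Hence the greedy procedure never gets stuck and produces a $2$-bounded $H$-coloring, contradicting the assumption.

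The only step that needs care is the bookkeeping identifying equitable with SE colorings when $k\ge n$, which is exactly what lets Theorem~\ref{n+d-1}, a statement about SE colorability, be applied. The $m=2$ case is a one-line count and should present no real obstacle.
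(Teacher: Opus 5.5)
Your proposal is correct and follows essentially the paper's proof. The case $k\ge n$ is handled by Proposition~\ref{Easy} and Theorem~\ref{n+d-1} with $d=1$; you also make explicit, via \eqref{nlek}, the identification of $1$-bounded and SE $H$-colorings that the paper leaves implicit. For $\frac{n+2}{2}\le k<n$, a $2$-bounded $H$-coloring always exists by greedy coloring along a $1$-degenerate ordering, with at most one blocked color per vertex.

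The only difference is bookkeeping. The paper colors $\{v_1,\dots,v_{\lceil n/2\rceil}\}$ and the remaining vertices each injectively by $\sigma$-FF and takes the union. You instead make a single pass with a cap of two uses per color and bound the saturated colors by $\lfloor (i-1)/2\rfloor$. Both counts reduce to the same inequality $k\ge\lceil n/2\rceil+1$.
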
 
{ 
\begin{proof}
    If $k\geq n$, all the claims follow from Proposition~\ref{Easy} or Theorem~\ref{n+d-1}, so
assume $\frac{n+1}{2}< k<n$. Let $\sigma=(v_1,\dots,v_n)$ be a $1$-degenerate ordering of $G$. Let $\{V_1,V_2\}$ partition $V(G)$, where $V_1:=\{v_1,\dots, v_t\}, t:=\lceil\frac{n}{2}\rceil$. By Proposition~\ref{Easy}, for each $i\in [2]$, there is a  $\sigma$-FF $H$-coloring $g_i$ of $G[V_i]$. Since $\sigma$ is $1$-degenerate, when constructing $g_2(v_i)$, we can avoid the unique color blocked by any previous $v_j$, even if $j\le t$. Now $g_1\cup g_2$ is an equitable $H$-coloring of $G$.
\end{proof}}

The proof of Corollary~\ref{n-1} does not necessarily provide an SE $H$-coloring because it does not control the number of color classes of size $2$. 
 By Example~\ref{ex7},   this problem is unavoidable. Below we prove a slight sharpening of Theorem~\ref{n+d-1} for stars.

\begin{prop}\label{star}
Suppose $G=(V,E)$ is the star $K(\{r\},X)$, $|G|=n\ge2$, and $H'$ is an $n$-cover for $G$. Let $u\in V$,    $\Gamma:=\bigcup_{v\in V-u}H'(v)$, $\gamma\in H'(u)$, and $H:=H'-(u,\gamma)$.
If $G$ has no SE  $H$-coloring, then
\begin{equation}\label{*}\tag{*}
\parbox{15.5cm}{
\begin{enumerate}[label=(\alph{enumi})]
    \item $H(u)\subseteq\Gamma$ and $|\Gamma|=n$, i.e., $H(u)\subseteq \Gamma$ and $H(w)=\Gamma$ for all $w\in V-u$; 
    \item $H(v,z)$ is a derangement for all $vz\in E$ with $v\in \{u,r\}$ and $z\ne u$; and
    \item $H(r,x)= H(r,y)$ for all  $x,y\in X-u$.
\end{enumerate}}
\end{equation}

\end{prop}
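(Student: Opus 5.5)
We prove the contrapositive: assuming one of (a)--(c) fails, we construct an injective $H$-coloring of $G$, which by \eqref{nlek} is SE since $|G|=n\le n$. Throughout we color in an order $\sigma$ in which $u$ comes \emph{last*}. This is a $1$-degenerate ordering whenever $u$ is a leaf. If $u=r$, we use instead the order that colors $r$ first, then the leaves; the leaves are then independent and we apply Hall's condition \eqref{IND}. In either case every vertex other than the last has at least $n-(i-1)-1\ge 1$ available colors, exactly as in \eqref{eq:Ai}. The only tight vertex is $u$, whose list has only $n-1$ colors. So the whole game is to make $A(u)\ne\emptyset$ at the end, using the same three levers as in the proof of Theorem~\ref{n+d-1}: spend a color outside $H(u)$, let $u$ be unblocked, or let a blocked color coincide with a used one.

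Case $u\in X$ (a leaf). Order $\sigma=(r,x_1,\dots,x_{n-2},u)$. Then $|A(u)|\ge (n-1)-(n-1)-1+\epsilon$, where $\epsilon$ counts the savings. We get a saving in each of the following situations.
\begin{enumerate}[label=(\roman{enumi})]
\item Some $w\ne u$ receives a color not in $H(u)$. If $H(u)\not\subseteq\Gamma$ or $|\Gamma|>n$, then some $w\in V-u$ has a color $\alpha\notin H(u)$, or $H(u)$ contains a color outside every other list. In the first situation we color $w$ with $\alpha$ early, choosing $w$'s predecessors (only $r$, if $w\ne r$) to avoid blocking $\alpha$. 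In the second situation that color is never used and never blocked except by $r$, so we give it to $u$ directly, after choosing $f(r)$ so as not to block it. This proves (a).
\item $H(r,u)(f(r))$ is undefined or lies in $R$. If $H(r,u)$ is not a derangement, pick $\alpha$ with $H(r,u)(\alpha)$ undefined or equal to $\alpha$, and set $f(r)=\alpha$. This handles (b) for $z=u$.
\item $H(r,x)$ fails to be a derangement for some $x\in X-u$. Then set $f(x)=f(r)=\alpha$ would be illegal, so instead we use the freedom to make $f(x)=H(r,u)(f(r))$, which creates a coincidence of blocked and used colors. Concretely, choose $\alpha=f(r)$ and require $f(x)=\beta:=H(r,u)(\alpha)$; this is possible when $\beta\ne\alpha$ and $H(r,x)(\alpha)\ne\beta$.
\end{enumerate}
For (c), suppose $H(r,x)\ne H(r,y)$. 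Pick $\alpha$ with $H(r,x)(\alpha)\ne H(r,y)(\alpha)$ and aim to set $f(x)=H(r,y)(\alpha)$, or to arrange an analogous coincidence with $u$. The main obstacle is this interaction: we need a choice of $f(r)$ and of the forced colors that remains compatible with Hall/first-fit counts for the remaining leaves. The remaining leaves have slack $\ge1$ except the second-to-last, which has slack $\ge 2$ minus forced constraints, so I expect we need to reorder the leaves so that constrained ones come early.

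Case $u=r$. Here $H(r)$ has $n-1$ colors, and we color $r$ first with $\alpha$. Each leaf $x$ then has $A(x)=\Gamma$-list $H(x)$ minus $\alpha$ and minus $H(r,x)(\alpha)$. We need an sdr for $n-1$ leaves from these sets. Hall fails only when all $A(x)$ equal a common set of size $n-2$. That forces $H(x)=\Gamma$ with $|\Gamma|=n$ and $H(r)\subseteq\Gamma$, giving (a). It also forces $H(r,x)(\alpha)$ to be the same color $\ne\alpha$ for all $x$, and this must hold for every $\alpha\in H(r)$. Varying $\alpha$ over all of $H(r)$ therefore gives (b) for edges from $r$, and gives (c) on $H(r)$, which is the whole domain that matters. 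I expect this case to be routine and the leaf case with (c) to be the delicate bookkeeping.
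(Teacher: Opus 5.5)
\textbf{There is a genuine gap in the leaf case $u\in X$.} It is a counting error, not a matter of bookkeeping.

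When $u$ is colored last, $|H(u)|=n-1=|R|$, so $|A(u)|=|R\smallsetminus H(u)|-b$. Here $b=1$ if $u$ has a blocked color that is not used, and $b=0$ otherwise. Your own estimate $(n-1)-(n-1)-1+\epsilon$ therefore needs $\epsilon\ge 2$: you must spend a color outside $H(u)$ \emph{and} neutralize the block, or spend two such colors. Each of (i)--(iii) supplies only one saving.

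Worse, these levers are available in every instance. Every $w\ne u$ has $|H(w)|=n>|H(u)|$, so the first situation in (i) never uses the failure of (a). Likewise, $H(r,u)$ is never a derangement, because $H(r)\not\subseteq H(u)$. You have also misread (b): because of the clause $z\ne u$, for $u\in X$ it concerns $H(u,r)$ and $H(r,x)$ with $x\in X-u$, never $H(r,u)$.

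So your argument would color every instance with $u\in X$, which is false. Take $n=3$, $X=\{x,u\}$, $H(r)=H(x)=[3]$, $H(u)=\{1,2\}$, $H(r,x)=(1~2~3)$ and $H(u,r)\colon 1\mapsto 3,\ 2\mapsto 1$. All of (a)--(c) hold and there is no injective $H$-coloring. Lever (ii) sets $f(r)=2$, which leaves $u$ unblocked but forces $f(x)=1$ and $A(u)=\emptyset$.

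The remaining leaf-case claims, (iii) and (c), are only announced. In the case $u=r$ your conclusion is right but the step is not. A single failing $\alpha$ only gives $H(x)=A\cup\{\alpha,\beta_x\}$ for a common $(n-2)$-set $A$, where $\beta_x:=H(r,x)(\alpha)$ may depend on $x$. To get (a) and (c) you must let $\alpha$ range over $H(r)$ and use the injectivity of $H(r,x)$.

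\textbf{How the paper proceeds.} The paper avoids the two-savings trap by proving (a) first. Then, in each case, it colors only $r$ (sometimes one more vertex) and verifies \eqref{IND} for all remaining leaves at once. The binding condition is $|\bigcup_{x\in X}A(x)|\ge n-1$. With (a) in hand, $\Gamma\smallsetminus H(u)$ is a single color $\gamma'$, and for $n\ge4$ Hall fails only if all $A(x)$, $x\in X-u$, equal one $(n-2)$-set that also contains $A(u)$. So for (c) one colors $r$ with an $\alpha$ satisfying $H(r,x)(\alpha)\ne H(r,y)(\alpha)$, which makes $A(x)\cup A(y)=\Gamma-\alpha$. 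Similar choices of $f(r)$ handle (b), using that $\gamma'$ can stay available to some $x\ne u$ but never to $u$.

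\textbf{Caveat: part of (b) is false for $n=3$.} The clause of (b) asserting that $H(r,x)$ is a derangement for $x\in X-u$ fails when $n=3$. Take $X=\{x,u\}$, $H(r)=H(x)=[3]$, $H(u)=\{1,2\}$, with cover edges $(r,3)\sim(x,1)$, $(r,2)\sim(x,3)$, $(r,3)\sim(u,1)$, $(r,1)\sim(u,2)$. Each of the four bijections $f\colon V\to[3]$ with $f(u)\ne 3$ uses a cover edge, so $G$ has no SE $H$-coloring, yet $H(r,x)(1)$ is undefined.

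The paper's Case~2 does not cover this configuration. Its first branch uses $|A(y)|\ge n-2$ for all $y\in X$, but for $y=u\in X$ only $|A(u)|\ge n-3$ holds. Its second branch assumes $v=u$. For $n\ge 4$, coloring $r$ with the offending color does work. So your item (iii) cannot be completed as stated at $n=3$.
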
  

\begin{proof} 

{
Arguing by contraposition, we assume (*) fails and find an SE $H$-coloring  of $G$. Our main tool is to color $r$ and then apply \eqref{IND}; for the analysis we may also color another vertex.

\emph{Case 1:}  (a) does not hold.  Then there are vertices $v,y\in V$ and a color $\beta\in H(v)$ with $y\ne u$ and $|H(y)-\beta|\ge n$. Prefer $v=u$, then $v=r$, then  $y\in X$. }
Set $f(v):=\beta$. 

Suppose $v=u$. If $u=r$ then $|A(y)|\ge n-1$ and $|A(x)|\ge n-2$ for all $x\in X$, so \eqref{IND} holds. 
Else $u\in X$. Now $A(r)\ne \emptyset$ since $r=y$ or $n\ge3$.  
If $y\ne r$ or $n=2$ then pick $f(r)\in A(r)$. 
In the latter case, $f$ is an SE $H$-coloring of $G$. In the former case,
 $|A(y)|\ge n-2$ and $|A(x)|\ge n-3$ for all $x\in X-u$, so \eqref{IND} holds.  
Else $y=r$ and $n\ge3$. Let $z\in X-u$.   As $z\ne y$, the preference for $y\in X$ gives  $|H(z)-\beta|\le n-1$. Thus there is $\alpha\in H(r)\smallsetminus H(z)$. Set $f(r):=\alpha$. Now $|A(z)|\ge n-2$ and $|A(x)|\ge n-3$ for all $x\in X-u$, so \eqref{IND} holds. 

Otherwise $v\ne u$. 
Now $v\ne u \ne y\ne v$, so $n\ge 3$. By the preference for $v=u$, $\beta\notin H(u)$. 
If $v=r$ then $A(u)\ne \emptyset$, $|A(y)|=n-1$, and $|A(x)|\ge n-2$ for all $x\in X-u$, so \eqref{IND} holds. 
Else, $v\in X-u$. 
By the preference for $v=r$, $\beta\notin H(r)$. If $r\ne u$ then $H(r)\smallsetminus H(v)\ne \emptyset$, contradicting $v\in X$. 
So $r=u$, and $A(r)\ne \emptyset$. 
Pick $f(r)\in A(r)$.  
Now $|A(y)|\ge n-2$ and $|A(x)|\ge n-3$ for all $x\in X$,  so \eqref{IND} holds. 
This proves (a).

\emph{Case 2:}  (b) does not hold. Then there is $vz\in E$ with $v\in \{u,r\}$ and $z\ne u$, and there is $\alpha\in H(v)$ such that either $H(v,z)(\alpha)=\alpha$ or it is undefined. If $r=u$ then $v=r$. Set $f(r):=\alpha$. Now
 $|A(z)|\ge n-1$ and $|A(y)|\ge n-2$ for every $y\in X$, so  \eqref{IND} holds. 
Else, $r\ne u=v$ and $z=r$.  Let $\gamma'\in \Gamma\smallsetminus H(u)$.
If there is $x\in X-u$ with $H(r,x)(\alpha)\ne\gamma'$ then set $f(r)=\alpha$. Now $\gamma'\in A(x)$, $\gamma\notin A(u)$, and $|A(y)|\ge n-2$  for all $y\in X$, so we are done by \eqref{IND}. Else put $f(r):=\beta$, where $H(r,u)(\beta)$ is undefined. Now $\gamma'\in A(x)\smallsetminus A(u)$ and $|A(y)|\ge n-2$ for all $y\in X$, so \eqref{IND} holds. This proves (b).

\emph{Case 3:}  (c) does not hold.  Then there are $x,y\in X-u$ and $\alpha \in H(r)$ with $H(r,x)(\alpha)\ne H(r,y)(\alpha)$. Set $f(r):=\alpha$. If $u\in X$ then $n\ge4$, so  $|A(u)|\ge 1$. Anyway,  $|A(z)|\ge n-2$ for all $z\in X-u$, and either $|A(x)|\ge n-1$ or $|A(y)|\ge n-1$, so \eqref{IND} holds. This proves (c).
\end{proof}

\section{Forests with given maximum degree}\label{sec4}

{
In this section we study 
EDP and}
 SEDP $k$-colorings of forests in terms of their maximum degree.
Call  a vertex $v\in G$ with $d(v)\le1$ an \emph{end}.  A vertex  $u$ is a \emph{hub} with \emph{body} $U$ and \emph{tail} $u'$ if 
{$\emptyset \ne U\subseteq N(u)\cap V_1(G)$ and   $N(u)= U+u'$.}

\begin{lem}\label{NS}If $G$ is a forest with $|G|\ge5$ that is not a star, then $G$ has two nonadjacent ends $l_1$ and $l_2$ such that  $G-l_i$ is not a star  for all $i\in[2]$. 
\end{lem}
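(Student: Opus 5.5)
We will first characterize the ends $l$ for which $G-l$ is a star, and then choose $l_1,l_2$ to avoid them.

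\emph{Characterizing the bad ends.} Call an end $l$ \emph{bad} if $G-l$ is a star. Suppose $l$ is bad, and let $G-l=K(\{c\},Z)$, where $|Z|=|G|-2\ge3$. Since $G$ is not a star, $l$ is not adjacent to $c$. Because $d(l)\le1$, exactly one of two things happens.
\begin{enumerate}[label=(\roman{enumi})]
\item $l$ is isolated. Then $G=K_{1,s}+K_1$ with $s\ge3$.
\item $l$ is adjacent to a single leaf $a\in Z$. Then $G$ is the star $K(\{c\},Z)$ with the edge $ca$ extended to the path $cal$, and $d(c)=|Z|\ge3$.
\end{enumerate}
So if some end is bad, then $G$ has one of these two shapes.

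\emph{Case 1: no end is bad.} Then any two nonadjacent ends will do, and we show they exist.
\begin{itemize}
\item If some component of $G$ is a tree with at least $3$ vertices, it has two leaves. These leaves are nonadjacent, since two adjacent leaves would form a $K_2$ component.
\item Otherwise every component is $K_1$ or $K_2$. Since $|G|\ge5$, there are at least three components. Taking ends from two different components gives two nonadjacent ends.
\end{itemize}

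\emph{Case 2: $G=K(\{c\},Z)+K_1$ with $|Z|\ge3$.} Take $l_1,l_2\in Z$. They are nonadjacent ends. For each $i$, the graph $G-l_i=K_{1,|Z|-1}+K_1$ is disconnected with at least $4$ vertices, so it is not a star.

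\emph{Case 3: $G$ is the star $K(\{c\},Z)$ with $ca$ extended to $cal$.} As $|Z|\ge3$, choose distinct $x,y\in Z-a$, and set $l_1:=x$, $l_2:=y$. These are nonadjacent ends. In $G-x$ the vertex $c$ is still adjacent to $a$ and $y$, and $l$ is still adjacent to $a$. A center of $G-x$ would have to be adjacent to every other vertex. But $c\not\sim l$, $a\not\sim y$, and every other vertex has degree at most $1$ in a graph with at least $4$ vertices. Hence $G-x$ is not a star, and by symmetry neither is $G-y$.

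The only delicate step is the characterization of bad ends. It has to account for isolated vertices, because an end is defined by $d(v)\le1$. It also relies on $|G|\ge5$, which guarantees $|Z|\ge3$; this is what makes Cases 2 and 3 work. Note also that the choice in Case 3 deliberately avoids the possibly bad end $l$.
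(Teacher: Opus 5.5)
Your proof is correct and complete. The paper states this lemma without proof and treats it as routine, so there is no written argument to compare with. Your approach is the natural way to fill the gap, and each step checks out:

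\begin{itemize}
\item You classify the \emph{bad} ends $l$, meaning those with $G-l$ a star. Here $|G|\ge 5$ gives $G-l$ a unique center of degree at least $3$. So a bad end forces $G=K_{1,s}+K_1$ or $G=K_{1,s}$ with one edge subdivided, where $s\ge 3$.
\item You then exhibit two nonadjacent good ends in each of the three resulting cases, and the case analysis is exhaustive.
\item You correctly count isolated vertices as ends, as the paper's definition $d(v)\le 1$ requires.
\end{itemize}
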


\begin{lem}\label{hub}
    If $G$ is a { tree} with $\Delta(G)\ge2$  that is not a star, then $G$ has a hub.
\end{lem}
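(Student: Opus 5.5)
Here $V_1(G)$ is the set of vertices of degree one, and a hub $u$ must have all its neighbours leaves except possibly the single tail $u'$. I would prove the statement by looking at the neighbour of the last vertex on a longest path.

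Let $P=x_0x_1\dots x_\ell$ be a longest path in the tree $G$. Since $G$ is not a star and $\Delta(G)\ge 2$, we have $|G|\ge 4$, and $G$ has diameter at least $3$, so $\ell\ge 3$. (A tree of diameter at most $2$ is a star; $K_1$ and $K_2$ are excluded by $\Delta(G)\ge2$, and $K_{1,1}$ is a star anyway.) I claim that $u:=x_1$ is a hub with tail $u':=x_2$ and body $U:=N(x_1)-x_2$.

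First, $x_0\in U$, so $U\ne\emptyset$. Next, let $w\in N(x_1)-x_2$. Then $w\notin P$: the vertex $w\ne x_2$ by choice, and $w\ne x_j$ for $j\ge 3$, since otherwise $x_1x_2\dots x_jx_1$ would be a cycle. If $w=x_0$, then $w$ is an end because $P$ is longest, since any other neighbour of $x_0$ would either extend $P$ or create a cycle.

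Now suppose $w\ne x_0$ and $w$ has a neighbour $w'\ne x_1$. Then $w'\notin P$, because $G$ is acyclic. Hence $w'wx_1x_2\dots x_\ell$ is a path longer than $P$, a contradiction. So $d(w)=1$, i.e. $w\in V_1(G)$, for every $w\in U$. Finally $N(x_1)=U+x_2$ by definition of $U$, which makes $x_1$ a hub.

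The only point needing care is justifying $\ell\ge3$ from the hypotheses, which is the diameter remark above; the rest is the standard longest-path argument.
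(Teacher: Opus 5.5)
Your longest-path argument is correct and complete: taking $u=x_1$ with tail $x_2$ and body $N(x_1)-x_2$ on a longest path $x_0x_1\dots x_\ell$ gives a hub. The paper states this lemma without proof, so there is no argument of its own to compare against, and yours is the natural one.

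One minor remark: your argument only needs $\ell\ge 2$, which already follows from $\Delta(G)\ge 2$. So the hypothesis that $G$ is not a star is never actually used, since the definition of a hub does not forbid the tail from being a leaf.
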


{

\begin{thm}\label{t13}
    Every forest $G$ is  a star or is SEDP $k$-colorable for all $k\ge \max\{\Delta(G),4\}$.
\end{thm}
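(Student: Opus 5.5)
We will argue by induction on $|G|$, for fixed $k\ge\max\{\Delta(G),4\}$, taking a minimal counterexample $G$ with a $k$-cover $H$ admitting no strongly $m$-bounded $H$-coloring, $m=\lceil |G|/k\rceil$. If $|G|\le k$, then by \eqref{nlek} we need an injective $H$-coloring. Proposition~\ref{Easy} handles $|G|\le k-1$. For $|G|=k$, Theorem~\ref{n+d-1} with $d=1$ forces $G=K_{1,n-1}$, a star, which is excluded. So we may assume $|G|>k$.

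The main reduction removes a set $S$ of exactly $k$ vertices from $G$. The aim is that $G-S$ is again a non-star forest with $\Delta(G-S)\le\Delta(G)\le k$; if $G-S$ is small, we instead use \eqref{nlek} and Proposition~\ref{Easy} directly. Since $|G-S|\equiv|G|\pmod k$ and $m(G-S,k)=m-1$, an SE $H[G-S]$-coloring $g$ extends to an SE $H$-coloring of $G$ exactly when $S$ can be colored injectively with available sets that avoid the colors blocked by $g$ on $N(S)\smallsetminus S$. One extra constraint must hold: the colors on $S$ must ``top up'' classes so that no class exceeds $m$ and the number of large classes is not too big. This is automatic, because $S$ uses all $k$ colors exactly once when every $H(v)$ equals the full palette. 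In general we must match $S$ to the colors of $\Gamma$, each used once. So we want $S$ to induce a forest with few edges to $G-S$, ideally one edge. We then color $S$ last along a $1$-degenerate ordering starting at the vertex adjacent to $G-S$, and use \eqref{IND} or the $\sigma$-FF counting from Proposition~\ref{Easy}: each vertex of $S$ has at most one blocked color plus used colors, leaving $k-(i-1)-1$ choices, and this is nonnegative except at the last vertex.

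To choose $S$ we use Lemma~\ref{hub} and Lemma~\ref{NS}. We take $S$ to be a connected subtree built from hubs and their bodies (leaves), peeled from an end of a longest path, so that $S$ meets $G-S$ in at most one edge. When a component is small, we take whole components. The last vertex of $S$ in the ordering has $k-1$ used colors and possibly one blocked color, so it may have no choice. This is precisely the tight case of Theorem~\ref{n+d-1}. There we will use the freedom from the conclusions of Theorem~\ref{n+d-1} and Proposition~\ref{star}: unless $H$ is plain with derangements and satisfies the equality conditions, there is slack. In the rigid case we re-choose $S$, for example by swapping a leaf using Lemma~\ref{NS}, so that $S$ is not a star, and then Theorem~\ref{n+d-1} supplies the coloring. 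Proposition~\ref{star}, with $u$ the attachment vertex and $\gamma$ its blocked color, is exactly the tool for a star-shaped $S$ with one forbidden node, and this is why it was proved.

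I expect the main obstacle to be guaranteeing that $G-S$ is not a star, or handling the case where it is. If $G-S$ is a star $K_{1,t}$, the induction hypothesis fails and Example~\ref{ex7} shows it genuinely may not be SE colorable. Lemma~\ref{NS} is designed to let us choose the removed ends so that what remains is not a star. When $|G|$ is close to $k$ (between $k+1$ and about $k+5$), we will treat the case directly: color a large non-star part injectively by Theorem~\ref{n+d-1}, then place the few remaining vertices by \eqref{IND}, using $k\ge4$ to ensure $A(v)$ has enough colors. The hypothesis $k\ge4$ is needed precisely for these small cases; Example~\ref{ex3} shows that $k=3$ fails.
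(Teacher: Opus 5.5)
Your plan has a genuine gap at its core. Removing exactly $k$ vertices and requiring an injective coloring of $S$ makes every step tight, and the tool you cite to break the tightness does not apply.

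Unless $S$ is a union of components, some $u\in S$ has a color $\gamma$ blocked by the coloring of $G-S$. So you need an injective coloring of the $k$-vertex forest $S$ from $H[S]-(u,\gamma)$, and that is not a $k$-cover. Theorem~\ref{n+d-1} only yields some injective $H[S]$-coloring, possibly with $f(u)=\gamma$. Hence ``re-choose $S$ to be a non-star and apply Theorem~\ref{n+d-1}'' proves nothing. Proposition~\ref{star} covers only star-shaped $S$, and in its rigid case (*) you give no way out.

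Moreover, the $S$ you describe need not exist. A connected $S$ with exactly one edge to $G-S$ is a branch at an edge, and branch sizes can skip $k$. For $k=4$, take the spider with a center of degree $4$ and four legs of length $2$: its branch sizes are $1,2,7,8$. So every $4$-set has at least two edges to the rest, hence at least two pre-blocked nodes, and none of the cited results addresses that situation. You also have no mechanism guaranteeing that $G-S$ is not a star, since Lemma~\ref{NS} concerns deleting a single end; you flag this but leave it open. (Incidentally, the ``topping up'' constraint is vacuous: since $|G-S|\equiv|G|\pmod k$ and $m$ drops by one, \emph{any} injective coloring of $S$ extends an SE coloring of $G-S$. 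Using ``each color of $\Gamma$ once'' is not the right condition when $|\Gamma|>k$.)

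The missing idea is to use the residue $i:=|G|\bmod k$ to create slack, so that a large piece is removed only when $k\mid|G|$.
\begin{itemize}
\item If $i>0$, the paper deletes one end $l$ with $G-l$ not a star (Lemma~\ref{NS}). An SE coloring of $G-l$ has at most $i-1$ classes that are full for $G$. So at least $k-i+1\ge2$ colors of $H(l)$ lie in non-full classes, while at most one is blocked.
\item If $|G|=sk$, the paper makes $G$ connected and takes a hub $u$ of maximum degree with body $U$ and tail $u'$. It sets $G'=G[U+u]$, dropping one leaf if $|U|=k-1$, and additionally removes an end $l\ne u'$ of $G-G'$ given by Lemma~\ref{NS}. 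The dropped isolated leaf, or else the maximality of $d(u)$, ensures $G-G'$ is not a star.
\item On $G_1=G[G'+l]$ one has $|G_1|\le k$, and every vertex has at least $|G_1|$ colors from non-full classes. The leaves in $U$ lose none of these to blocking; only $u$ and $l$ lose at most one each.
\item Choose $g(u)$ so that it does not block some $\gamma\in H^*(v)\smallsetminus H^*(l)$ at a leaf $v\in U$. Then Hall's condition holds on the independent set $U+l$.
\end{itemize}
The hub's leaves have no neighbors outside $G_1$, and they supply exactly the slack that removing a fixed block of $k$ vertices never gives you.
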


 \begin{proof} Suppose not. Let  $G=(V,E)$ be a minimum counterexample. Then $G$ is a forest, but not a star, and there is a $k$-cover $H$ for $G$ such that $G$ has no SE $H$-coloring. 
By Proposition~\ref{Easy} and Theorem~\ref{n+d-1}(a),
$|G|> k$.

 Suppose $i:=|G| \bmod k>0$. Now $|G|\ge k+1\ge5$. 
 By Lemma~\ref{NS}, there is an end $l$ such that  $G':=G-l$ is not a star. So $G'$ has an SE $H$-coloring $f$.
  As $f$ has at most $i-1$ large classes, at least $k-(i-1)\ge2$ classes are not full. As  $|B^f(l)|\le d(l)\le1$,  $l$ can be colored with a color whose class is not full, a contradiction.  
Thus $|G|=sk$, for some integer $s\ge 2$. 
Now we may assume that $G$ is connected: adding an edge between the ends of two components will neither create a star, increase $\Delta(G)$ past $2$, nor make it easier to color $G$.

Choose a hub $u$ with body $U$ and tail $u'$ so that $d(u)$ is maximum; it exists by Lemma~\ref{hub}. 
If $|U|=k-1$ then set $G':=G[U+u-v]$, for some $v\in U$.  Else $|U|\le k-2$; set $G':=G[U+u]$. 
Anyway, $|G'|\le k-1$. 
Now $G'':=G-G'$ is not a star: in the first case, $v\in G''$ is isolated in $G''$; in the second case, if $c$ is the center of a star in $G''$ then {$G''$ is connected and} $d_{G''}(c)=k$, so $c\ne u'$ and $c$ is a hub in $G$ with body $N(c)-u'$ and tail $u'$,   contradicting the choice of $u$. 
 By Lemma~\ref{NS}, $G''$ has an end $l\ne u'$ such that $G_2:=G''-l$ is not a star. 
So $G_2$ has an SE $H$-coloring $f$. Put $G_1:={G[G'+l]}$. 
Now $|G_1|\le k$, and $f$ has at most $k-|G_1|$ large color classes. 
So the set of colors $H'(z)\subseteq H(z)$ whose classes in $f$ are not full with respect to $G$ will have size at least $|G_1|$ for all vertices $z\in G_1$. 
Set $H^*(z):=H'(z)\smallsetminus B^f(z)$. 
It suffices to show that $G_1$ has an injective $H^*$-coloring $g$, for then $f\cup g$ is an SE $H$-coloring of $G$, a contradiction.

Now $|H^*(l)|\ge|G_1|-1$, $|H^*(u)|\ge k-2\ge2$, and $|H^*(v)|\ge 
|G_1|$ for all $v\in U$. Fix $v\in U$. We may assume $\gamma \in H^*(v)\smallsetminus H^*(l)$. Choose $g(u)=\alpha$ so that $H^*(u,v)(\alpha)\ne \gamma$. Now extend $g$ to the independent set $U+l$ using \eqref{IND}.
\end{proof}
}

{
There are two derangements on $[3]$, which we denote by $D:=(1
~2~3)$ and $D':=(1~3~2)$. We denote by $D_1\dots D_{n-1}$ the $3$-cover $H$ of $v_1\dots v_n$ with $H(v_i,v_{i+1})=D_i$, where $D_i\in\{D,D'\}$.

\begin{cor}\label{path}
For all paths $P=v_1\dots v_t$ with $t\ne3$, if $k\ge3$ then $P$ is SEDP $k$-colorable.
\end{cor}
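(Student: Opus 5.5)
Since a path has maximum degree at most $2$, Theorem~\ref{t13} already does most of the work. For $k\ge4$, a path $P=v_1\dots v_t$ is a forest with $\Delta(P)\le2<4\le k$. If $P$ is not a star, Theorem~\ref{t13} makes it SEDP $k$-colorable. The paths that are stars are $P_1$, $P_2$ and $P_3$. Excluding $t=3$, only $P_1$ and $P_2$ remain, and since $k\ge4>t$ these are handled by Proposition~\ref{Easy} with $d\le1$ and $k\ge t+1$. So the only real case is $k=3$.

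For $k=3$ I would argue by induction on $t$, mirroring the proof of Theorem~\ref{t13}.

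\emph{Small cases.} For $t\le2$ use Proposition~\ref{Easy}, since $3\ge t+1$. For $t=4$ this is exactly the case $n=k+1$, where Theorem~\ref{t13}'s argument breaks down. Here I need a direct argument: an SE coloring of $P_4$ with $3$ colors uses one color twice and the other two colors once each.
\begin{itemize}
\item If $H$ is not plain, or some $H(v_i,v_{i+1})$ is not a derangement, a free choice appears and a greedy or Hall argument via \eqref{IND} should work.
\item Otherwise we may normalize so that $H=D_1D_2D_3$ with each $D_i\in\{D,D'\}$. Up to reversal and relabeling there are only a few patterns, and for each I would exhibit an explicit coloring. For instance, color $v_1$ and $v_4$ with the same color $\alpha$ whenever that is compatible, and then fill $v_2,v_3$ with the remaining two colors.
\end{itemize}
The case $t=5$ (with $5\bmod3=2$) would be similar or handled by the removal step below.

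\emph{Inductive step for $t\ge5$.} If $t\not\equiv0\pmod3$, delete the end $v_t$. By induction $P-v_t$ (with $t-1\ne3$) has an SE $H$-coloring with at most $(t-1)\bmod 3$ large classes. Hence at least two classes are not full, and $v_t$ has only one blocked color, so it can be colored while keeping the coloring strongly equitable.

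If $t=3s$ with $s\ge2$, split off the last three vertices $v_{t-2}v_{t-1}v_t$, color $P':=v_1\dots v_{t-3}$ by induction (this needs $t-3\ne3$, so $t=6$ must be treated separately), and then injectively color the last three vertices with all three colors. Vertex $v_{t-2}$ has one blocked color from $v_{t-3}$, and the three remaining vertices must receive distinct colors compatible along two edges. This is precisely an SE coloring of $P_3$ with a precolored neighbor, and Example~\ref{ex7} shows $P_3$ itself may fail. So here I would use the freedom in the induction to shift the split: choose which three consecutive vertices to leave uncolored, for example $v_1v_2v_3$ or the last three, or an end vertex plus an interior pair, and use the structure from Theorem~\ref{n+d-1}(b),(c) and Proposition~\ref{star}. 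Those results say the bad covers for $P_3$ are plain derangements with equal maps at the center, and I would show that when one is bad, modifying the color of the precolored neighbor $v_{t-3}$ (which has two available choices) breaks the obstruction.

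\emph{Main obstacle.} I expect the hardest part to be the cases $t=3s$, especially $t=6$ and small $t$ such as $4$. There the argument reduces to a finite check over covers $D_1\dots D_{t-1}$, which I would organize by the pattern of $D$ versus $D'$.
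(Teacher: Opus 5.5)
Your reduction to $k=3$, the cases $t\le 2$, and the step for $t\not\equiv 0\pmod 3$ (delete the end $v_t$, then give it an unblocked, non-full color) match the paper. Your $t=4$ case check can be replaced by a one-line argument. Delete the \emph{middle} vertex $v_3$. Since $v_1v_2+v_4$ is not $K_{1,2}$, Theorem~\ref{n+d-1}(a) gives it an injective $H$-coloring. Then $v_3$ has at most two blocked colors, so it can be colored.

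The genuine gap is the step $t=3s$. Splitting off the terminal triple $v_{t-2}v_{t-1}v_t$ fails whenever $H$ on that triple is the cover of Example~\ref{ex7}, e.g.\ $H(v_{t-1},v_{t-2})=H(v_{t-1},v_t)=(1~2~3)$. That $P_3$ has no injective $H$-coloring at all, so the color on $v_{t-3}$ is irrelevant to your stated plan.

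Recoloring $v_{t-3}$ is not a remedy either. For a plain cover, the inductive coloring of $v_1\dots v_{t-3}$ has every class of size exactly $(t-3)/3$. After recoloring, one class is already full, so the last three vertices must repeat a color. That is a different problem, and it can still fail. Take $f(v_{t-3})=1$, let the color blocked at $v_{t-3}$ by $v_{t-4}$ be $2$, use the cover above on the triple, and set $H(v_{t-3},v_{t-2})(3)=1$. The only admissible pattern is then $(1,2,1)$, and it is blocked.

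Shifting the triple to $v_1v_2v_3$ does not help: for the alternating cover $DD'DD'\cdots$, every three consecutive vertices form a bad $P_3$. An ``end vertex plus interior pair'' leaves a colored part that is not a path, so your path induction does not apply to it.

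The missing idea, which is how the paper proceeds (the hub step of Theorem~\ref{t13} with $u=v_{t-1}$, $U=\{v_t\}$, $l=v_1$), is to leave uncolored a triple that does \emph{not} induce $P_3$ while the colored part stays a path. Color $v_2\dots v_{t-2}$ by induction and leave $v_1$ together with the end pair $v_{t-1}v_t$. Then $v_1$ and $v_{t-1}$ each keep at least two admissible colors and $v_t$ keeps three. One can choose $f(v_{t-1})$ so that $v_1$ and $v_t$ still receive distinct admissible colors.

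This needs $t-3\ne 3$, so $t=6$ is genuinely special. If $v_2v_3v_4$ or $v_3v_4v_5$ is SE $H$-colorable, the same step (reversed if needed) works. Otherwise Theorem~\ref{n+d-1}(b),(c) forces $H$ on $v_2v_3v_4v_5$ to be plain with pattern $DD'D$ after relabeling. Only $H(v_1,v_2)$ and $H(v_5,v_6)$ remain, and a short case analysis handles them (Theorem~\ref{n+d-1} on $v_1v_2v_3$, Proposition~\ref{star}, and explicit colorings). Your ``finite check over $D_1\dots D_5$'' tacitly assumes this plain-derangement reduction, which is exactly the part that needs an argument.
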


\begin{proof}

Let $H$ be a $k$-cover for $P$, and argue by induction on $t$.
Assume $k=3$ since otherwise we are done by Theorem~\ref{t13}. 
 By Proposition~\ref{Easy} and Theorem~\ref{n+d-1}(a),
$|G|> 3$. 
As in the proof of Theorem~\ref{t13}, we can assume that $|P|\bmod 3=0$, but we must take special care with the case $|G|=4<5$ since Lemma~\ref{NS} does not apply. 
As $v_1v_2+v_4$ is not a star, it has an SE $H$-coloring $f$. 
As $d(v_3)=2$, we can extend $f$ to an SE $H$-coloring of $P$.

We continue to follow the proof of Theorem~\ref{t13}, but again there is a problem with using Lemma~\ref{NS}, when $s=2$, because then $|G''|=4$. Thus we must treat this case separately. 
There is a third issue with calculating $H^*(u)$ because now $k=3$. But in the path case, $|U|=k-2$, so $\|u,G_2\|\le 1$. Thus we are left with the case $t=6$.

Now suppose $t=6$. If 
 $v_2v_3v_4$ or $v_3v_4v_5$ has an SE $H$-coloring, then  extend it as above. Else, without loss of generality, by Theorem~\ref{n+d-1},
 $H[v_2v_3v_4v_5]=DD'D$. 
 If $H(v_1,v_2)\ne D'\ne H(v_5,v_6)$ then by Theorem~\ref{n+d-1}, $v_1v_2v_3$ has an SE $H$-coloring $f$ and by Proposition~\ref{star}, $f$ can be extended to an SE $H$-coloring of $P$. 
 If $H(v_1,v_2)= D'= H(v_5,v_6)$ then $f(v_1,v_2,v_3,v_4,v_5,v_6):=(1,1,3,3,2,2)$ is an SE $H$-coloring of $P$. 
 Else, (say) $H(v_1,v_2)= D'$ and $H(v_5,v_6)\ne D'$.  
  If $H(v_5,v_6)(2)\ne 2$ then $f$ is still an SE $H$-coloring of $P$. Else $H(v_5,v_6)(1)\ne 2$ and $g(v_1,v_2,v_3,v_4,v_5,v_6):=(2,3,3,1,1,2)$ is an SE $H$-coloring of $P$.

\end{proof}

\section{General graphs with given maximum degree}\label{sec5}
In this section we show that all graphs with maximum degree at most $\Delta$ are SEDP $O(\Delta^{2})$-colorable.
It takes (us) a surprising amount of effort to prove this seemingly weak result.

\begin{thm}
\label{thm:thm:main}Let {$k\in\mathbb{Z}^+$}. All graphs $G=(V,E)$ with $3\Delta^{2}(G)\leq k$
are SEDP $k$-colorable. 
\end{thm}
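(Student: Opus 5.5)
We argue by induction on $|G|$, following the pattern used for Theorem~\ref{t13}. Write $\Delta:=\Delta(G)$ and fix a $k$-cover $H$ with $k\ge 3\Delta^2$. If $|G|\le k$ then Proposition~\ref{Easy} applies with $d=\Delta$: we have $n+\Delta\le k$ unless $n>k-\Delta$, and in that range we use the sharper count in \eqref{nlek}. So the real work is for $|G|>k$. If $|G|\bmod k>0$, we delete any vertex $v$, take an SE $H$-coloring $f$ of $G-v$ by induction, and recolor $v$. The coloring $f$ has at most $(|G|\bmod k)-1$ large classes, so at least $k-(|G|\bmod k)+1$ classes are not full. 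Only $d(v)\le\Delta$ of them are blocked, so we must ensure that enough unblocked, nonfull classes remain. This works unless $|G|\bmod k$ is close to $k$, and that residual case is handled by the general step below. The main case is $|G|=sk$.

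For $|G|=sk$ we delete a set $S$ of exactly $k$ vertices, chosen to be "spread out": we want $S$ independent and $2$-independent in the sense that no two vertices of $S$ share a neighbor. Such a set exists greedily as long as $|G|\ge k(\Delta^2+1)$. Otherwise we choose $S$ greedily with the weaker property that each vertex of $S$ has few neighbors in $S$. By induction, $G-S$ has an SE $H$-coloring $f$ with every class of size exactly $s-1$, and all $k$ classes must gain exactly one vertex. So we need an $H$-coloring $g$ of $S$ that is injective, avoids blocked colors, and gives a perfect matching between $S$ and the colors. If $S$ is independent, Hall's condition \eqref{IND} is needed with $|A(w)|\ge k-\Delta$. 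Hall fails only for sets $T$ with $|T|>k-\Delta$ whose union of available colors misses some color $\beta$. Such a $T$ would require $\beta$ to be blocked at more than $k-\Delta$ vertices of $S$. But $\beta$ is blocked at $w$ only via a neighbor of $w$ colored $H(\cdot)^{-1}(\beta)$-appropriately, and the class of the color inducing the block has only $s-1$ vertices, each of which blocks $\beta$ at no more than $\Delta$ members of $S$ (at most one if $S$ is $2$-independent).

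The main obstacle is that the blocking is not controlled by the color classes. A vertex $x$ with $f(x)=\alpha$ blocks $H(x,w)(\alpha)$, which may equal $\beta$ for many different $\alpha$. So one color $\beta$ can be blocked at every $w\in S$, and Hall's condition fails. To fix this we plan a swapping step. If $\beta$ is unavailable at all of $S$, we pick a vertex $y\in G-S$ with $f(y)=\beta$ and find $w\in S$ that can take $\beta$ after $y$ is moved into $S$'s role (exchange $y$ and $w$ between the two parts). There are $s-1\ge1$ candidates for $y$ and $k$ candidates for $w$. Each exchange is obstructed only by the at most $\Delta$ neighbors of $y$ and $w$ and their second neighborhoods, which is $O(\Delta^2)$ obstructions. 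The hypothesis $k\ge 3\Delta^2$ is exactly what we expect to leave a free pair. Concretely, we would prove a lemma: given a non-SE near-coloring in which one class has a deficit and another a surplus, there is an alternating move reducing the defect, counted using $|N(y)\cup N(w)\cup N^2(\cdot)|\le 3\Delta^2-1<k$. Iterating this lemma, in the style of Hajnal--Szemer\'edi accessibility arguments but with single-step moves, completes the induction. Verifying the counting in this exchange lemma is where I expect most of the effort.
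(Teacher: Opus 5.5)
Your proposal has a genuine gap: the step that carries the whole difficulty is left as a plan, and the reductions around it do not close.

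\textbf{The reductions.} For $k-\Delta<n\le k$ you appeal to ``the sharper count in \eqref{nlek}'', but \eqref{nlek} contains no count. It only says that, for $n\le k$, an SE coloring is the same as an injective one. Greedy coloring in a degenerate order guarantees only $k-(i-1)-\Delta$ available colors for the $i$th vertex, and this is $\le 0$ once $i\ge k-\Delta+1$. So injectively coloring a $k$-vertex graph is already the core problem, and here there is no $G-S$ to swap with. Likewise, deleting one vertex $v$ fails when $|G|\bmod k>k-\Delta$ and $v$ has $\Delta$ neighbors. You defer this to a ``general step'' formulated only for $|G|=sk$. There, incidentally, ``every class has size exactly $s-1$'' is false for non-plain covers, since more than $k$ colors may be in play.

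In fact no residue split is needed. For every $n\ge k$, let $f_0$ be an SE $H$-coloring of $G-X$ with $|X|=k$. Then any extension of $f_0$ that is injective on $X$ is an SE $H$-coloring of $G$: each class grows by at most one, and a class becomes large only if it was large in $f_0$. So the whole theorem reduces to extending $f_0$ injectively to a well-chosen $k$-set $X$, which is exactly the step you do not prove.

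\textbf{The main step.} You correctly observe that \eqref{IND} can fail because one color $\beta$ may be blocked at every vertex of $S$, however spread out $S$ is. But the exchange lemma meant to repair this is neither stated precisely nor proved, and the move you describe does not work. By assumption each $w\in S$ has a neighbor $x\in G-S$ with $H(x,w)(f(x))=\beta$. After exchanging $y$ and $w$, giving $w$ the color $\beta$ still conflicts with $x$ unless $x=y$ and $H(y,w)(\beta)=\beta$, which is impossible if, e.g., all matchings are derangements. There are further problems:
\begin{itemize}
\item moving $y$ into $S$ can destroy the independence of $S$ on which your Hall argument rests;
\item nothing guarantees that iterated moves terminate;
\item for $|G|<k(\Delta^2+1)$ your $S$ is not independent to begin with.
\end{itemize}

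\textbf{What the paper does instead.} It never recolors $f_0$. It takes $X\supseteq X_0\cup X_1$, where $X_0$ is an independent set of $\Delta$ vertices and $X_1=N(X_0)$. Then vertices of $X_0$ receive no blocks from $f_0$ and can be colored last, and the rest $X_2$ of $X$ has at least $2\Delta^2-\Delta$ vertices. It colors $X$ one vertex at a time while maintaining three invariants:
\begin{itemize}
\item (a) every color blocked at an uncolored $x\in X_0$ is already used on $X$;
\item (b) every color used on $X_0$ is blocked at, or absent from, every uncolored $z\in X_2$;
\item the counting inequality \eqref{eq:inv2}.
\end{itemize}
By (b), the at most $\Delta$ colors blocked at $z\in X_2$ are offset by the $|X_0|=\Delta$ vertices of $X_0$. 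By (a), vertices of $X_0$ lose nothing to blocking, so the counts close. The only delicate moment is coloring a vertex $y\in X_1$ that has uncolored neighbors in $X_0$. There the colors $y$ would block are first placed on non-neighbors of $y$ in $X_2$, or on the affected $x$ itself. The large reserve $X_2$ pays for this, and that is where $k\ge 3\Delta^2$ is used.
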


\begin{proof}
Set $\Delta:=\Delta(G)$ and fix a k-cover $H$ for $G$ with $k\geq3\Delta^{2}$.
We argue by induction on $n$ that $G$ has an SEDP $H$-coloring if $|G|\leq n$.
If $n=0$  
then the result is trivial. The case  $n=k$ implies
the cases $1\leq n\leq k-1$: if $f$ is an SE $H$-coloring of $G$, then $f$ restricted
to $G'\subseteq G$ is an SE $H$-coloring, since all color classes of $f$ have
size at most one.

Otherwise  $n\geq k$ and the theorem holds for all graphs with fewer than $|G|-k$
vertices. As $G$ is $(\Delta+1)$-colorable
 and $n\geq 3\Delta^2$, $G$ has an { independent set
$X_{0}\subseteq V$ with $|X_0|:=\Delta$.} Set 
 {$X_{1}:=\bigcup\{N(v):v\in X_{0}\}$}.
Pick a $k$-set $X$ with $X_{0}\cup X_{1}\subseteq X\subseteq V$, 
and set $X_{2}:=X\smallsetminus (X_{0}\cup X_{1})$.
By induction, $G-X$ has a SE $H$-coloring $f_{0}$. Notice that $E(X_{0},V\smallsetminus X_{1})=\emptyset$.

It suffices to extend $f_0$ to an $H$-coloring $f$ that is injective on $X$.
We will construct $f$ iteratively while maintaining two invariants:
\begin{enumerate}[label=(\Alph{enumi})]
\item \label{enu:A_0}If $x\in X_{0}$ is uncolored then any color in $B(x)$ has already
been used on $X$.
\item \label{enu:A_2}If $z\in X_{2}$ is uncolored then all colors used on $X_{0}$ are
in $B(z)\cup(\Gamma\smallsetminus H(z))$.
\end{enumerate}
Initially, $X$ is uncolored, and $f:=f_{0}$. As $N[x]\subseteq X$
for all $x\in X_{0}$, both \ref{enu:A_0} and \ref{enu:A_2} hold.

We will maintain dynamic variables $D,\overline{D},R,A(x),B(x)$ that depend on the
current value of $f$. 
{ Sometimes we will specify this current value by writing, for example, $\overline{D}^f$.} 
Suppose that at a given step $f\supseteq f_{0}$ is  an injective
$H$-coloring. Let $D$ be the domain of $f$ restricted to $X$, and let
$R$ be the  range $f(D)$ of $f$ restricted to $D$. Set $\overline{D}:=X\smallsetminus D$.
So $\overline{D}$ is the set of uncolored vertices. Also, for $i\in\{0,1,2\}$,
we have dynamic variables $D_{i}:=D\cap X_{i}$, $\overline{D}_{i}:=\overline{D}\cap X_{i}$,
and $R_{i}:=f(D_{i})$.  As $f$ is injective, $|D|=|R|$ and $|D_{i}|=|R_{i}|.$ When
$f$ is extended to $x\in\overline{D}$, the color $f(x)$ must be in  $H(x)$ and
neither blocked from $x$ nor used on $X$;  let $A(x):=H(x)\smallsetminus(B(x)\cup R)$
be this set of \emph{available} colors for $x$.

Now we rewrite \ref{enu:A_0} and \ref{enu:A_2} as:
\addtocounter{equation}{1}
\begin{equation}
\tag*{(\theequation)}
\text{(a) }B(x)\subseteq R\text{ for all }x\in\overline{D}_{0}~\text{ and }~\text{(b) }R_{0}\subseteq B(z)\cup(\Gamma\smallsetminus H(z))\text{ for all }z\in\overline{D}_{2}.\label{eq:inv}
\end{equation}
Since $\|X_{0},X_{1}\|\leq\Delta| X_{0}|=\Delta^{2}$ and $|X_{2}|=k-|X_{0}|-|X_{1}|\geq2\Delta^{2}-\Delta$,
initially,
\begin{equation}
\text{if }~\overline{D}_{1}\ne\emptyset~\text{ then }~|\overline{D}_{2}|\geq\|\overline{D}_{0},\overline{D}_{1}\|+(\Delta-2)|\overline{D}_{0}|+\Delta.\label{eq:inv2}
\end{equation}

Call an  $H$-coloring $f\supseteq f_0$ \emph{promising} if it is injective on $X$ and  satisfies \ref{eq:inv}
and \eqref{eq:inv2}. Now~\eqref{eq:inv2} implies
\begin{equation}
\text{if }~\overline{D}_{1}\ne\emptyset~\text{ then }~|\overline{D}_{2}|\geq\Delta~~\mbox{for every promising  \ensuremath{H}-coloring.}\label{eq:end2}
\end{equation}
Arguing by induction on $|D|$, it suffices to show that every promising $H$-coloring $f$ can be extended to a promising $H$-coloring with larger domain.

\smallskip
\emph{Case 1:} $\|\overline{D}_{0},\overline{D}_{1}\|>0.$ Say $y\in\overline{D}_{1}$
with $\|\overline{D}_{0},y\|>0$. Using \eqref{eq:end2}, $|\overline{D}_1|+|\overline{D}_2|\ge \Delta+1$, so there is $\beta\in A(y)$.
For $x\in\overline{D}_{0}\cap N(y)$, let $\alpha_{x}:=H(y,x)(\beta)$. Set
\[W:=\{x\in \overline{D}_0:\alpha_x\in A(x)-\beta\}~\mathrm{and}~{\Lambda}:=\{\alpha_x:x\in W\}.\]

 Let {$\Lambda_{0}\subseteq \Lambda$} be maximum subject to there being an injection $\iota: \Lambda_0\to\overline{D}_{2}\smallsetminus N(y)$ such that 
 \[\mbox{

 $g' :=f|D\cup \iota^{-1}$ 
  is an injection and $f\cup g'$ is an $H$-coloring.}\]
Put $W_0:=\{x\in W:\alpha_x \in \Lambda_0\}$ and  $Z:=\ra(\iota)$ and $g:=f\cup g'$.

If $W_{0}=W$ then set  $h:=g+(y,\beta)$. We claim that $h$ is promising. As the newly colored vertices in $Z$
are neither colored with $\beta$ nor adjacent to $y$, $g'+(y,\beta)$ is an injection and $h$ is an $H$-coloring. 
Moreover, \ref{eq:inv}(a)
holds since for every $x\in W=W_0$  there is a vertex $z\in Z$ with
$g(z)=\alpha_x$, and \ref{eq:inv}(b) holds trivially. Since the number of colored vertices
in $X_{2}$ increases by at most the number of uncolored neighbors of $y$ in $X_{0}$,
\eqref{eq:inv2} holds.

Otherwise \emph{ }$W_{0}\ne W$. Let $x\in W\smallsetminus W_{0}$. 
Set $Z'=N(y)\cap \overline{D}_{2}$. 
Note that $\|x,Z\|=\emptyset$. Since $\alpha_x\notin \Lambda_0$,  $g'(z)\ne \alpha_x$ 
for all  $z\in Z$.
 By the maximality of $\Lambda_0$, (*) $\alpha_x\notin A^{g}(z)$ for any $z\in \overline{D}_2\smallsetminus (Z\cup Z')$. Thus $h':=g'+(x,\alpha_x)$ is an injection and $f\cup h'$ is an $H$-coloring. 
Let $h'':D\cup Z'\cup Z''+x\to\Lambda$ be an injection such that $h'\subseteq h''$, $h''(x)=\alpha_x$, and $h:=f\cup h''$ is an $H$-coloring. 
This is possible since 
 $\|\overline{D}_{0},\overline{D}_{2}\|=0$ and $|A(y)|\ge 2\Delta-1$ for  all $y\in Z'$ by \ref{eq:inv}(b). 
We claim that $h$ is promising. Trivially, \ref{eq:inv}(a) holds, and \ref{eq:inv}(b) holds by (*). 
As $|\overline{D}_2^h|=|\overline{D}_2|-|Z\cup Z'|\ge |\overline{D}_2|-\Delta+1$, $\|\overline{D}_0,\overline{D}_1\|\ge \|\overline{D}^h_0,\overline{D}^h_1\|+1$, and 
$|\overline{D}_0|\ge|\overline{D}_0^h|+1$,
\begin{align}\notag
  |\overline{D}^h_{2}|&\ge |\overline{D}_{2}|-\Delta+1
\geq_{\eqref{eq:inv2}}\|\overline{D}_{0},\overline{D}_{1}\|+(\Delta-2)|\overline{D}_{0}|+\Delta-\Delta+1 \\
&\ge (\|\overline{D}^h_{0},\overline{D}^h_{1}\|+1)+(\Delta-2)(|\overline{D}^h_{0}|+1)+1\ge
\|\overline{D}^h_{0},\overline{D}^h_{1}\|+(\Delta-2)|\overline{D}^h_{0}|+\Delta,\notag
\end{align} 
so \eqref{eq:inv2} holds for $h$, and $h$ is promising.

\smallskip
\emph{Case 2:} $\|\overline{D}_{0},\overline{D}_{1}\|=0.$
Now the vertices of $\overline{D}_0$ are isolated in $G':=G[\overline{D}]$.
We will color $\overline{D}$ in the order  $\overline{D}_1,\overline{D}_2,\overline{D}_0$, so 
 \ref{eq:inv} and \eqref{eq:inv2} will continue to hold.
If there is $y\in\overline{D}_{1}$, then 
\[
|A(y)|\geq (k-|D|)-|B(y)|\geq|\overline{D}|-\Delta\geq|\overline{D}_{1}|+|\overline{D}_{2}|-\Delta\geq_\eqref{eq:end2} 1,
\]
so we can color $y$ from $A(y)$ and keep \ref{eq:inv}
and \eqref{eq:inv2}.

So assume $X_1\subseteq D$, but there is a vertex $z\in\overline{D}_{2}$. By \ref{eq:inv}(b), 
\begin{equation}
H(z)\cap R_{0}\subseteq H(z)\cap B(z).\label{eq:H(z)}
\end{equation}
 As $|X_{0}|=\Delta$, the number of available colors for $z$ is 
\begin{align*}
|A(z)| & \geq|H(z)|-|H(z)\cap((B(z)\cup R_{0})\cup(R_{1}\cup R_{2}))|\geq_{\text{\eqref{eq:H(z)}}}k-|B(z)|-|R|+|R_{0}|\\
 & \geq|\overline{D}|+|R_{0}|-\Delta\geq|\overline{D}_{2}|+|\overline{D}_{1}|+|\overline{D}_{0}|+|D_{0}|-\Delta\geq|\overline{D}_{2}|+|X_{0}|-\Delta\geq|\overline{D}_{2}|.
\end{align*}
Color $z$ from $A(z)$.  Now \ref{eq:inv} holds since $D_0$ did not change, and \eqref{eq:inv2} holds since $\overline{D}_1=\emptyset$.

Finally, assume $X_1 \cup X_2\subseteq D$, but there is   $x\in\overline{D}_{0}$.
The number of available colors for $x$ is
\begin{align*}
|A(x)| & \geq|H(x)|-|B(x)\cup R|\geq_{\ref{eq:inv}(a)}k-|R|\geq|X|-|D|=|\overline{D}_{0}|.
\end{align*}
Color $x$ from $A(x)$.  Now \ref{eq:inv}(a) holds since $x\notin N(D_0)$, \ref{eq:inv}(b) holds since $\overline{D}_2=\emptyset$, and \eqref{eq:inv2} holds since $\overline{D}_1=\emptyset$.
\end{proof}


\color{black}
\section{Concluding remarks}

1. It seems that EDP  coloring and SEDP coloring are natural extensions of EL coloring and SEL coloring with interesting properties.

\medskip
2. Let $g(D)$ (respectively, $g_{se}(D)$) be the minimum $k_0$ such that for every $k\geq k_0$ every graph with maximum degree at most $D$ is EDP (respectively, SEDP)
$k$-colorable. 
It would be interesting to determine whether  $g(D)$ and/or
$g_{se}(D)$ are  $O(D)$.
 Our Theorem~\ref{thm:thm:main} gives only a quadratic upper bound on $g_{se}(D)$. We even do not know $g(3)$. 
 Examples~\ref{ex:4} and~\ref{ex:5} show that $g(3)\geq 7$ and Theorem~\ref{thm:thm:main} shows that $g(3)\leq 27$.

\medskip
3. It looks that the differences between the bounds for EDP  coloring and EL coloring are more significant when the graphs are dense. In particular, we think that every $n$-vertex graph with maximum degree less than $n/2$ is EDP $n$-colorable, but cannot prove it.
\medskip

4. 
As observed by Dvo\v rak and Postle~\cite[Observation 3]{2018DvPo}, in DP $k$-coloring it is enough to consider plain $k$-covers.
In all cases that we have come across, if a graph $G$   with a $k$-cover $H$ is not
SE $H$-colorable, then  also there is a plain $k$-cover $H'$ of $G$ such that $G$
is  not
SE $H'$-colorable. It would be interesting to determine whether this  always holds for SEDP colorings. This is not the case for SEL coloring, see~\cite[Example 22]{KKX4} (Also arXiv:2504.14711).

\bibliographystyle{plain}
\bibliography{Ref}

\end{document}